\theoremstyle{theorem}\newtheorem{theorem}{Theorem}
\theoremstyle{definition}\newtheorem{definition}{Definition}
\theoremstyle{theorem}\newtheorem{lemma}{Lemma}
\theoremstyle{theorem}\newtheorem{proposition}{Proposition}
\theoremstyle{theorem}\newtheorem{corollary}{Corollary}
\theoremstyle{remark}\newtheorem{remark}{Remark}
\theoremstyle{remark}\newtheorem{example}{Example}
\title{Asymptotic Stability of empirical processes and related functionals}
\author{Jos\'e L. Fern\'andez}
\address{Departamento de Matem\'ticas. Universidad Aut\'onoma de Madrid}
\email{joseluis.fernandez@uam.es}
\author{Enrico Ferri}
\address{Departamento de Matem\'aticas,
 Facultad de Inform\'atica, Universidade da Coru\~na}
\email{enrico.ferri@udc.es}
\author{Carlos V\'azquez}
\address{Departamento de Matem\'aticas,
 Facultad de Inform\'atica, Universidade da Coru\~na}
\email{carlosv@udc.es} 
\date{\today}
\keywords{stationarity, weak convergence of empirical process, random measures, universal Glivenko-Cantelli classes, consistency, robustness, $\psi$-weak topology}
\subjclass[2010]{60B10-60G10-60G57-62G35-28C15-60G09-91B30}
\begin{document}

\maketitle
%\justify

\begin{abstract}
Let $E$ be a space of observables in a sequence of trials $\xi_n$ and define $m_n$ to be the empirical distributions of the outcomes. We discuss the almost sure convergence of the sequence $m_n$ in terms of the $\psi$-weak topology of measures, when the sequence $\xi_n$ is assumed to be stationary. In this respect, the limit variable is naturally described as a certain canonical conditional distribution. Then, given some functional $\tau$  defined on a space of laws, the consistency of the estimators $\tau(m_n)$ is investigated. Hence, a criterion for a refined notion of robustness, that applies when considering random measures, is provided in terms of the modulus of continuity of $\tau$.
\end{abstract}

\section{Introduction}

Let $E$ be a space in which any element encodes an observable in a sequence of trials $\xi_1,\xi_2,...$, and let $E^\mathbb{N}$ be the entire space of the sequences of outcomes, endowed with a background probability measure $\mathbb{P}$. In particular, throughout this paper, we assume that the observations $\xi_1,\xi_2,...$ form a stationary sequence with respect to the measure $\mathbb{P}$.

For any $n\geq 1$, let $m_n\triangleq n^{-1}\sum_{i\leq n}\delta_{\xi_i}$ be the \emph{empirical distribution} generated by the observations. Given the entire class $\mathfrak{M}_1(E)$ of laws on $E$, endowed with some proper measurable structure, each empirical mean $m_n$ may be always understood as a random element of $\mathfrak{M}_1(E)$.

This paper is motivated by the study of the asymptotic stability of the random sequence $\tau_n\triangleq\tau(m_n)$, $n\geq 1$, when the variables $\xi_n$ encode the \emph{historical data} of a certain \emph{financial risk factor} and $\tau:\mathfrak{M}_1(E)\rightarrow T$ is a certain statistic assessing the downside risk of the related exposure.
Indeed, the study of the asymptotic behaviour of the estimators $\tau_n$ is crucial to gauge the risk properly, see Cont et al. \cite{con}, F\"ollmer and Weber \cite{foll} and Kr\"atschmer et al. \cite{kra1,kra}.

In this respect, a key aspect is whether we have \emph{consistency} of the risk estimators $\tau_n$, i.e. whether such a sequence admits a proper limit in some stochastic sense.

If the variables  $\xi_n$ are independent and with  common distribution $\mu$, Varada\-rajan theorem guarantees the $\mathbb{P}$-almost sure convergence of the empirical process $m_n$ to the law $\mu$ in the weak topology, and the consistency of the estimators $\tau_n$ is directly obtained from the continuity property of the statistic $\tau$.

Nevertheless, as  highlighted by Cont et al. \cite{con} and Kou et al. \cite{kou} some commonly used  risk functionals, more precisely the functionals associated to the entire class of \emph{law-invariant convex risk measures}, fail to be continuous with respect to the weak topology of measures. Briefly, the reason lies behind the fact that the weak topology is not sensitive enough to the tail behaviour of the distributions, which,  by the way, is the main issue in risk analysis.

An approach to overcome this lack of sensitive has been proposed in Kr\"atsch\-mer et al. \cite{kra1,kra} and Z\"ahle \cite{zh2,zh1}. Its main ingredient is to introduce a proper refinement of the  topological structure, fine enough to control  the distributions of the tails, via the subspaces $\mathfrak{M}_1^\psi(E)$ of $\mathfrak{M}_1(E)$ defined in terms of \emph{gauge} functions $\psi$ and the associated $\psi$-weak topologies.

Theorem \ref{th:varadarajan_type} of Section \ref{sec:consistency} describes the $\mathbb{P}$-almost sure convergence of the empirical process $m_n$, under the $\psi$-weak topology. In particular, the limit variable is naturally described as the random measure obtained as the conditional distribution of $\xi_1$ given the \emph{shift-invariant} $\sigma$-algebra associated to the variables $\xi_n$. We point out that in the specific case when in addition the variables $\xi_n$ form an ergodic sequence, the consistency result as described in our Corollary \ref{co:consisency} has been developed by Kr\"atschmer et al. \cite{kra}.

\

When assessing the downside risk associated to some financial exposure, besides consistency, \emph{robustness} is a desirable property
of asymptotic stability. Following Hampel,  an estimation is said to be robust if small changes of the law related to the outcomes $\xi_n$ only result in small changes of the distribution characterizing the estimators $\tau_n$. While the notion of qualitative robustness has been classically developed,  cf. \cite{cuev,hampphd,hamp,hub,miz}, by means of the weak topology of measures, Kr\"atschmer et al. \cite{kra1,kra} and Z\"ahle \cite{zh2,zh1}, provide a similar version that applies to the $\psi$-weak topology as the basic topological background.
%In this respect, this formulation is obtained by imposing a particular uniform version of Varadarajan theorem to apply in the $\psi$-weak topology framework.

The main goal of Section \ref{sec:robustness} is to develop a criterion for the {asymptotic stability} of the estimators $\tau_n$, {which turns out to be related to the notion of qulitative robustness}, by exploiting the consistency result discussed in Corollary \ref{co:consisency}.
With this aim in mind, we formulate there a {notion of asymptotic stability} in terms of the modulus of continuity of the statistic $\tau$. Such a formulation naturally arises when dealing with random measures, and hence in the particular case of the canonical conditional distribution defined by the variables $\xi_n$, if stationarity holds.

\

%The results of this paper  apply to notions of probabilistic symmetries stronger than stationarity, for instance, to \emph{exchangeability}, which provides the main pillar of \emph{Bayesian nonparametrics}.
%In this respect, the robustness of the estimation admits a natural interpretation in terms of the \emph{prior distribution} that characterizes the statistical model.
%
%\

The paper is organized as follows. In Section \ref{sec:backgroud} we describe the topological structure of the workspace that we consider throughout the paper. In Section \ref{sec:setup} we present some useful measure theoretical results. In section \ref{sec:consistency} we propose a criterion for consistency that is exploited in Section \ref{sec:robustness} in order to assess the {asymptotic stability} of the estimators $\tau_n$.

\section{Background}\label{sec:backgroud}

Let $E$ be a Polish space and let $\mathscr{E}$ be its Borel $\sigma$-algebra.

We denote by $ \mathfrak{M}_1(E)$  the family of Borel probability measures on $E$ and by  $\mathfrak{C}_b(E)$ the Banach space  of bounded continuous  functions defined on $E$, endowed with the supremum norm.

Here and in the sequel we use the notation $$\mu f \triangleq \int_E f(x)\mu(dx),$$
wherever the measure $\mu\in\mathfrak{M}_1(E)$ and the Borel function $f$ are such that  $\int_E |f(x)|\mu(dx)<+\infty$.

The \emph{weak topology} $\sigma(\mathfrak{M}_1(E),\mathfrak{C}_b(E))$ is the coarsest topology on $\mathfrak{M}_1(E)$ that renders continuous each map $\mu\in\mathfrak{M}_1(E)\mapsto\mu f$,  when $f$ runs over  $\mathfrak{C}_b(E)$.

Since $E$ is Polish, the space $\mathfrak{M}_1(E)$ endowed with the weak   topology is metrized as a complete and separable metric space by means of the Prohorov distance
\begin{equation}\label{eq:proh_dist1}
\begin{aligned}
\pi(\mu,\nu)&\triangleq \inf \lbrace \varepsilon > 0 : \mu (B)\leq \nu(B^\varepsilon) + \varepsilon, \ \text{for any $B\in\mathscr{E}$}\rbrace,\\ & \text{for any $\mu,\nu\in\mathfrak{M}_1(E)$,}
\end{aligned}
\end{equation}
where $B^\varepsilon \triangleq \lbrace x\in E : \inf_{y\in B} d(y,x)< \varepsilon \rbrace$ stands for the $\varepsilon$-hull of $B\in\mathscr{E}$, and $d$ denotes a \textit{consistent distance} i.e. a metric on $E$ that is consistent with its topological structure.

\

\subsection{Bounded Lipschitz functions  $\mathfrak{BL}(E)$}\label{sec:weak_topology_bounded_lip}

Let $\mathfrak{BL}(E)$ denote the linear space of Lipschitz bounded functions on $E$. For a function  $f\in \mathfrak{BL}(E)$ we define
$$
 \Vert f \Vert_{\mathfrak{BL}(E)}\triangleq \Vert f \Vert_\infty + \Vert f \Vert_{\mathfrak{L}(E)}$$
where  $\Vert f\Vert_{\mathfrak{L}(E)}$ is given by $\Vert f\Vert_{\mathfrak{L}(E)}\triangleq \sup_{x\neq y} \vert f(x)-f(y)\vert/d(x,y)$, where $d$ is a consistent metric on $E$. The space $\mathfrak{BL}(E)$ endowed with the norm $\Vert \cdot \Vert_{\mathfrak{BL}(E)}$ is a Banach space, (cf. \cite{dud}, Proposition 11.2.1).

\

The  weak topology on $\mathfrak{M}_1(E)$ may  be alternatively generated by means of  the space $\mathfrak{BL}(E)$ of bounded  Lipschitz functions on $E$,  instead of the space $\mathfrak{C}_b(E)$ of bounded continuous functions: the weak   topology is also the coarsest topology which renders continuous each of the mappings $\mu\in \mathfrak{M}_1(E)\mapsto \mu f$, when $f$ runs over $\mathfrak{BL}(E)$.

We recall that
\begin{equation}\label{eq:metric_beta}
\beta(\mu,\nu)\triangleq \sup\lbrace \vert (\mu-\nu)f \vert : \Vert f \Vert_{\mathfrak{BL}(E)}\leq 1 \rbrace, \qquad \text{for any $\mu,\nu\in\mathfrak{M}_1^\psi(E)$},
\end{equation}
defines a metric on $\mathfrak{M}_1(E)$ equivalent to  the Prohorov metric (\ref{eq:proh_dist1}). Hence, given a sequence $\mu_0,\mu_1,...$ in $\mathfrak{M}_1(E)$, one has that $\mu_n\rightarrow \mu_0$ in the weak topology if and only if $\beta(\mu_n,\mu_0)\rightarrow 0$, as $n\rightarrow +\infty$, (cf. \cite{dud} Theorem 11.3.3).

\begin{remark}\label{re:equivalent_metric_tot_bounded}
Observe that the distance $\beta$ as well as the Prohorov distance $\pi$ depend on the distance $d$. Besides, if $d^\prime$ is a metric on $E$ equivalent to $d$, then, with obvious notation, the corresponding distance $\pi^\prime$ is equivalent to $\pi$ and the same for $\beta^\prime$ and $\beta$. This allows us to consider the consistent metric $d$ that turns out to be more useful for our purposes. In particular, among all the distances consistent with the topology on $E$, there is one that is \textit{totally bounded}, which will be convenient to use later on; see, e.g., Theorem 2.8.2 in \cite{dud}. On the other hand, note that the specific choice of the consistent metric $d$ does not affect the separability of $E$.
\end{remark}

\

\noindent\emph{Separability issues.} We recall that the space $\mathfrak{C}_b(E)$ endowed with the supremum norm $\Vert \cdot \Vert_\infty$ is, in general,  not separable. Likewise, the space $\mathfrak{BL}(E)$ is, in  general,  not separable for the topology induced by the norm $\Vert \cdot \Vert_{\mathfrak{BL}(E)}$.

Let us define $\mathfrak{BL}_1(E)$ to be the unit ball in $\mathfrak{BL}(E)$, i.e. the set  of functions $f\in\mathfrak{BL}(E)$ such that $\Vert f\Vert_{\mathfrak{BL}(E)}\leq 1$. Recall that the family $\mathfrak{BL}_1(E)$ of $\mathfrak{BL}(E)$ depends on the actual distance $d$ on $E$ used in the definition of the norm $\Vert \cdot \Vert_{\mathfrak{BL}(E)}$

The  following proposition will play a relevant role later on in this paper.

\begin{proposition}\label{pr:separable_unit_ball}
If the consistent distance $d$ on $E$ is totally bounded, then the unit ball $\mathfrak{BL}_1(E)$ is separable for the supremum norm $\Vert \cdot \Vert_\infty$.
\end{proposition}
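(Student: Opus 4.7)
The plan is to reduce to the case of a compact metric space via completion, then apply Arzelà--Ascoli. Since $d$ is totally bounded on $E$, the completion $(\widehat{E},\widehat{d})$ of $(E,d)$ is both complete and totally bounded, hence compact. The idea is that separability of $\mathfrak{BL}_1(E)$ in sup norm should follow from compactness of its image in $C(\widehat{E})$.

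First I would observe that every $f\in\mathfrak{BL}_1(E)$, being Lipschitz with constant $\|f\|_{\mathfrak{L}(E)}\leq 1$, is in particular uniformly continuous on $(E,d)$, and therefore extends uniquely to a continuous function $\widehat{f}:\widehat{E}\to\mathbb{R}$. A standard approximation argument (approximating points of $\widehat{E}$ by Cauchy sequences in $E$) shows that this extension preserves both the sup norm and the Lipschitz constant, so $\widehat{f}\in\mathfrak{BL}_1(\widehat{E})$. Moreover, since $E$ is dense in $\widehat{E}$, the assignment $f\mapsto \widehat{f}$ is an isometry from $(\mathfrak{BL}_1(E),\|\cdot\|_\infty)$ into $(C(\widehat{E}),\|\cdot\|_\infty)$; it therefore suffices to prove that the image $\widehat{\mathfrak{BL}}_1\triangleq\{\widehat{f}:f\in\mathfrak{BL}_1(E)\}$ is separable in $C(\widehat{E})$.

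Next, the family $\widehat{\mathfrak{BL}}_1\subseteq C(\widehat{E})$ is uniformly bounded (all functions have sup norm at most $1$) and equicontinuous (all are $1$-Lipschitz with respect to $\widehat{d}$). Since $\widehat{E}$ is compact, the Arzelà--Ascoli theorem implies that $\widehat{\mathfrak{BL}}_1$ is relatively compact in $C(\widehat{E})$ endowed with the supremum norm. As a relatively compact subset of a metric space, its closure is compact and hence totally bounded and separable; so $\widehat{\mathfrak{BL}}_1$ itself is separable. Transporting a countable dense subset back through the isometry $f\mapsto\widehat{f}$ yields a countable dense subset of $\mathfrak{BL}_1(E)$ for $\|\cdot\|_\infty$.

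The steps are all standard and I expect no real obstacle; the only point that deserves care is verifying that the extension $\widehat{f}$ preserves both $\|\cdot\|_\infty$ and $\|\cdot\|_{\mathfrak{L}}$ simultaneously, so that the relevant ball in $C(\widehat{E})$ consists of equi-Lipschitz functions on the compact space, which is what activates Arzelà--Ascoli. Everything else is bookkeeping.
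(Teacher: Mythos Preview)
Your proposal is correct and follows essentially the same route as the paper: pass to the completion $\bar{E}$, which is compact since $d$ is totally bounded; extend each $f\in\mathfrak{BL}_1(E)$ to $\bar{f}\in\mathfrak{BL}_1(\bar{E})$ preserving the $\mathfrak{BL}$-norm; invoke Arzel\`a--Ascoli on the compact space to obtain separability there; and then transport a countable dense set back to $E$ by restriction. The only cosmetic difference is that the paper isolates the compact case as a separate lemma and phrases the final step as restricting a dense subset of $\mathfrak{BL}_1(\bar{E})$ to $E$, while you work directly with the extension isometry and its image---the underlying argument is identical.
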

For the proof of Proposition \ref{pr:separable_unit_ball} we shall use the following particular case.
\begin{lemma}\label{le:separable_unit_ball_compact}
Assume further that the space $E$ is compact, then $\mathfrak{BL}_1(E)$ is separable for the supremum norm $\Vert \cdot \Vert_\infty$.
\end{lemma}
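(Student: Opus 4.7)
The plan is to deduce separability from compactness via the Arzel\`a--Ascoli theorem. Concretely, since $E$ is compact, $\mathfrak{C}_b(E)=C(E)$ is a Banach space under $\Vert\cdot\Vert_\infty$, and I would argue that $\mathfrak{BL}_1(E)$ sits inside it as a \emph{compact} subset, which is automatically separable as a metric space (every totally bounded metric space is separable).

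First, I would verify the two Arzel\`a--Ascoli hypotheses for $\mathfrak{BL}_1(E)$. Uniform boundedness is immediate from the definition of the norm: any $f\in\mathfrak{BL}_1(E)$ satisfies $\Vert f\Vert_\infty\leq \Vert f\Vert_\infty+\Vert f\Vert_{\mathfrak{L}(E)}=\Vert f\Vert_{\mathfrak{BL}(E)}\leq 1$. Equicontinuity, in fact uniform $1$-Lipschitz continuity, follows from
\[
|f(x)-f(y)|\leq \Vert f\Vert_{\mathfrak{L}(E)}\,d(x,y)\leq d(x,y), \qquad x,y\in E,
\]
uniformly in $f\in\mathfrak{BL}_1(E)$. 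Arzel\`a--Ascoli then gives that $\mathfrak{BL}_1(E)$ is relatively compact in $(C(E),\Vert\cdot\Vert_\infty)$.

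Second, I would verify that $\mathfrak{BL}_1(E)$ is closed in $(C(E),\Vert\cdot\Vert_\infty)$. If $(f_n)\subset \mathfrak{BL}_1(E)$ converges uniformly to $f$, then $\Vert f\Vert_\infty=\lim_n \Vert f_n\Vert_\infty$, and passing to the pointwise limit in the inequality $|f_n(x)-f_n(y)|\leq \Vert f_n\Vert_{\mathfrak{L}(E)}\,d(x,y)$ yields $\Vert f\Vert_{\mathfrak{L}(E)}\leq \liminf_n \Vert f_n\Vert_{\mathfrak{L}(E)}$. Summing and using the convergence of $\Vert f_n\Vert_\infty$,
\[
\Vert f\Vert_{\mathfrak{BL}(E)}\leq \liminf_n \Vert f_n\Vert_{\mathfrak{BL}(E)}\leq 1,
\]
so $f\in\mathfrak{BL}_1(E)$. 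Combined with the previous step, $\mathfrak{BL}_1(E)$ is compact, hence separable, for $\Vert\cdot\Vert_\infty$.

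I do not foresee any serious obstacle: the argument is essentially a textbook application of Arzel\`a--Ascoli, whose hypotheses are precisely what the normalization $\Vert f\Vert_{\mathfrak{BL}(E)}\leq 1$ is designed to provide on a compact space $E$. The only mild point is the lower semicontinuity of the Lipschitz seminorm under uniform convergence, used in the closedness step; this will also be what is really needed in the proof of the general Proposition \ref{pr:separable_unit_ball}, where one presumably reduces to the compact case by embedding $E$ (equipped with a totally bounded consistent metric, as guaranteed by Remark \ref{re:equivalent_metric_tot_bounded}) into its compact completion and extending bounded Lipschitz functions isometrically.
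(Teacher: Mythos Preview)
Your argument is correct and follows essentially the same route as the paper: both invoke Arzel\`a--Ascoli on the uniformly bounded, equicontinuous family $\mathfrak{BL}_1(E)$ over the compact space $E$ to obtain compactness, hence separability, in $\Vert\cdot\Vert_\infty$. You supply in addition the explicit closedness check via lower semicontinuity of $\Vert\cdot\Vert_{\mathfrak{L}(E)}$, which the paper omits, and your anticipation of how Proposition~\ref{pr:separable_unit_ball} is proved (completion and Lipschitz extension) is also accurate.
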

Note that the consistent metric $d$ in Proposition \ref{pr:separable_unit_ball} might fail to be complete in general, since completeness is not a topological invariant. In the special case, when $d$ is indeed complete, then $E$ turns out to be compact (cf. \cite{dud}, Theorem 2.3.1), and Proposition \ref{pr:separable_unit_ball} boils down to Lemma \ref{le:separable_unit_ball_compact}.
\begin{proof}[Proof of Lemma \ref{le:separable_unit_ball_compact}]
The result follows directly from \emph{Ascoli-Arzel\'a theorem}, (cf. \cite{dud}, Theorem 2.4.7). Indeed, since the family $\mathfrak{BL}_1(E)$ is  uniformly bounded and equicontinuous and $E$ is compact, then the family $\mathfrak{BL}_1(E)$ is compact with respect to the $\Vert\cdot \Vert_{\infty}$-topology, and, consequently,  separable.
%$\hfill \square$
\end{proof}

%****************
%It's worth to highlight that the structure of the space $\mathfrak{BL}(E)$, and hence also the topology generated by the related norm $\Vert \cdot \Vert_{\mathfrak{BL}(E)}$, may be different when changing the metric $d_E$, even when the topological structure of $E$ is preserved. Besides, in view of what argued in Remark \ref{re:weak_topology_bounded_lip}, the space $\mathfrak{BL}(E)$ generates the weak topology on $\mathfrak{M}_1(E)$ irrespective of the specific metric generating topological structure of $E$. Then, according to Remark \ref{re:equivalent_metric_tot_bounded}, it is not restrictive assuming $E$ to be totally bounded.
%
%****************

The following proof of Proposition \ref{pr:separable_unit_ball} is modelled upon ideas contained in the proof of Thereom 11.4.1 in \cite{dud}.
\begin{proof}[Proof of Proposition \ref{pr:separable_unit_ball}]
Let $\bar{E}$ be the completion of $E$ with respect to the metric $d$, and define $\mathfrak{BL}_1(\bar{E})$ to be the unit ball in $\mathfrak{BL}(\bar{E})$ defined by the norm $\Vert\cdot\Vert_{\mathfrak{BL}(\bar{E})}$.

Fix $g\in\mathfrak{BL}_1(E)$ and denote by $\bar{g}$ the unique extension of $g$ (see, e.g. Proposition 11.2.3 in \cite{dud}) defined on the entire $\bar{E}$ such that $\Vert \bar{g} \Vert_{\mathfrak{BL}(\bar{E})}=\Vert g \Vert_{\mathfrak{BL}(E)}$ and hence so that $\bar{g}\in\mathfrak{BL}_1(\bar{E})$.

Note that $\bar{E}$ is compact, since $E$ is assumed to be totally bounded, (cf. \cite{dud}, Theorem 2.3.1). Lemma \ref{le:separable_unit_ball_compact} gives us a  subset $\mathfrak{N}$ of $\mathfrak{BL}_1(\bar{E})$ which is countable and dense in $\mathfrak{BL}_1(\bar{E})$ with respect to  the supremum norm $\Vert \cdot \Vert_\infty$.

Thus, given $g\in\mathfrak{BL}_1(E)$, for any $\varepsilon>0$ one can find $f\in\mathfrak{N}$ so that $\Vert \bar{g}-f\Vert_{\infty}\leq \varepsilon$. On the other hand, letting $f\vert_E$ be the restriction of $f$ to the domain $E$, one has
$\Vert g - f\vert_E \Vert_\infty \leq \Vert \bar{g} - f \Vert_{\infty}\leq \varepsilon$. Hence, the family $\mathfrak{N}\vert_E$ of the functions in $\mathfrak{N}$ restricted to $E$ provides a dense and countable subset of $\mathfrak{BL}_1(E)$ in the norm $\Vert \cdot \Vert_\infty$. %$\hfill \square$
\end{proof}

\subsection{Gauges $\psi$.} Let $\psi$ be a continuous function on $E$, satisfying $\psi\geq 1$ everywhere on  $E$. Throughout the paper, $\psi$ will play the role of \emph{gauge function}. In particular,
following Follmer and Schied, see \cite{foll3}, and also \cite{kra1,kra}, we associate to such $\psi$ the space of functions $\mathfrak{C}_\psi(E)$ given by $$\mathfrak{C}_\psi(E)\triangleq \lbrace f\in \mathfrak{C}(E): \Vert f/\psi\Vert_\infty< \infty\rbrace\, ,$$ and the space of probability measures $\mathfrak{M}_1^\psi(E)$ defined by $$\mathfrak{M}_1^\psi(E)\triangleq \{\mu \in \mathfrak{M}_1(E): \mu \psi <+\infty\}\, .$$
Observe that  $\mathfrak{C}_b(E)\subseteq \mathfrak{C}_\psi(E)$ and that $\mathfrak{M}_1^\psi(E)\subseteq \mathfrak{M}_1(E)$.

\begin{definition}[$\psi$-weak topology]
The \emph{$\psi$-weak topology} $\sigma(\mathfrak{M}_1^\psi(E),\mathfrak{C}_\psi(E))$ is the
coarsest topology on $\mathfrak{M}_1^\psi(E)$ that renders continuous the maps $\mu\in \mathfrak{M}_1^\psi(E)\mapsto\mu f$, varying $f\in\mathfrak{C}_\psi(E)$.
\end{definition}

Besides the $\psi$-weak topology, in $\mathfrak{M}_1^\psi(E)$ we need to consider also the  \emph{relative weak topology} induced on $\mathfrak{M}_1^\psi(E)$ as a subspace of $\mathfrak{M}_1(E)$, endowed with the weak topology as defined above. This relative weak topology is actually $\sigma(\mathfrak{M}_1^\psi(E),\mathfrak{C}_b(E))$, the coarsest topology so that for each $f\in \mathfrak{C}_b(E)$, the mapping $\mu\in \mathfrak{M}_1^\psi(E)\mapsto \mu f$ is continuous, see, e.g., Lemma 2.53 in \cite{ali}.
The $\psi$-weak topology is in general finer than the relative weak  topology.

\

If $\psi\equiv 1$ or simply if $\psi$ is bounded above, then $\mathfrak{C}_b(E)= \mathfrak{C}_\psi(E)$,  $\mathfrak{M}_1^\psi(E)=\mathfrak{M}_1(E)$ and the $\psi$-weak topology and the relative weak topology coincide.

\

We introduce now a distance $d_\psi$ on $\mathfrak{M}_1^\psi(E)$ by
\begin{equation}\label{eq:psi_metric}
d_\psi(\mu,\nu)\triangleq \pi(\mu,\nu)+\vert (\mu-\nu)\psi\vert, \hspace{1cm} \text{for any $\mu,\nu\in\mathfrak{M}_1^{\psi}(E)$.}
\end{equation}

\

The following Proposition combines the results of  \cite{foll3} and \cite{kra1}.
\begin{proposition}\label{prop:distance psi and polish}
$\mathfrak{M}_1^\psi(E)$ endowed with the $\psi$-weak topology is a Polish space and its topology is generated by the distance $d_\psi$.

$\mathfrak{M}_1^\psi(E)$ endowed with the relative weak topology is separable.
\end{proposition}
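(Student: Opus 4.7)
The plan is to handle the two claims separately. The second is immediate: $(\mathfrak{M}_1(E),\pi)$ is a separable metric space since $E$ is Polish, and every subspace of a separable metric space is separable in the induced topology, which is exactly the relative weak topology. For the first claim I would first show that $d_\psi$ generates the $\psi$-weak topology, and then establish Polishness via an embedding into a standard Polish space of measures.

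To prove that $d_\psi$ metrizes the $\psi$-weak topology, I would show
$$
\mu_n\to\mu \ \text{ in }\ \sigma(\mathfrak{M}_1^\psi(E),\mathfrak{C}_\psi(E)) \iff \pi(\mu_n,\mu)\to 0 \ \text{ and }\ \mu_n\psi\to\mu\psi,
$$
which is the same as $d_\psi(\mu_n,\mu)\to 0$. The forward direction is immediate because $\mathfrak{C}_b(E)\subset\mathfrak{C}_\psi(E)$ and $\psi\in\mathfrak{C}_\psi(E)$. For the converse, fix $f\in\mathfrak{C}_\psi(E)$ and a continuous cutoff $\chi_L\colon[1,\infty)\to[0,1]$ with $\chi_L\equiv 1$ on $[1,L]$ and $\chi_L\equiv 0$ on $[2L,\infty)$; set $f_L\triangleq f\cdot(\chi_L\circ\psi)$ and $\psi_L\triangleq\psi\cdot(\chi_L\circ\psi)$. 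Both $f_L$ and $\psi_L$ lie in $\mathfrak{C}_b(E)$, and pointwise $|f-f_L|\le\|f/\psi\|_\infty\,(\psi-\psi_L)$. Weak convergence applied to $\psi_L$ together with $\mu_n\psi\to\mu\psi$ yields $\mu_n(\psi-\psi_L)\to\mu(\psi-\psi_L)$, and by dominated convergence (with dominant $\psi\in L^1(\mu)$) $\mu(\psi-\psi_L)\downarrow 0$ as $L\to\infty$. Splitting $|\mu_n f-\mu f|\le|\mu_n f_L-\mu f_L|+\mu_n|f-f_L|+\mu|f-f_L|$ and sending first $n\to\infty$ and then $L\to\infty$ delivers $\mu_n f\to \mu f$.

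For Polishness the key observation is that, since $\psi\ge 1$ is continuous, $1/\psi\in\mathfrak{C}_b(E)$. I would define the transform
$$
\Phi\colon\mathfrak{M}_1^\psi(E)\longrightarrow\mathfrak{M}_+(E),\qquad \Phi(\mu)\triangleq\psi\cdot\mu,
$$
where $\mathfrak{M}_+(E)$ is the space of finite positive Borel measures on $E$ endowed with the weak topology $\sigma(\mathfrak{M}_+(E),\mathfrak{C}_b(E))$, which is Polish whenever $E$ is. The identities $\Phi(\mu)(f)=\mu(f\psi)$ for $f\in\mathfrak{C}_b(E)$ and $\mu(g)=\Phi(\mu)(g/\psi)$ for $g\in\mathfrak{C}_\psi(E)$ exhibit $\Phi$ as a homeomorphism onto its image, since $f\mapsto f\psi$ sends $\mathfrak{C}_b(E)$ into $\mathfrak{C}_\psi(E)$ and $g\mapsto g/\psi$ sends $\mathfrak{C}_\psi(E)$ into $\mathfrak{C}_b(E)$. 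The image is $\{\nu\in\mathfrak{M}_+(E):\nu(1/\psi)=1\}$, a closed subset of $\mathfrak{M}_+(E)$ because $\nu\mapsto\nu(1/\psi)$ is weakly continuous. A closed subspace of a Polish space being Polish, it follows that $(\mathfrak{M}_1^\psi(E),\sigma(\mathfrak{M}_1^\psi(E),\mathfrak{C}_\psi(E)))$ is Polish.

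The main obstacle is the metric characterization: upgrading weak convergence to $\psi$-weak convergence requires converting the scalar condition $\mu_n\psi\to\mu\psi$ into uniform control of the $\psi$-weighted tails along the sequence. The cutoff device $\psi_L$ does exactly this, because $\psi-\psi_L$ is continuous and nonnegative, so that weak convergence against the bounded continuous function $\psi_L$, combined with convergence of the total $\psi$-masses, forces convergence of the tail integrals $\mu_n(\psi-\psi_L)$. Once that step is in place, Polishness is a clean reduction to the classical Polishness of $\mathfrak{M}_+(E)$ through the transform $\Phi$ described above.
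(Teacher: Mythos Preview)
Your proof is correct. The paper's own proof simply defers to two references: Corollary~A.45 of F\"ollmer--Schied for Polishness and Lemma~3.4 of Kr\"atschmer et al.\ for the metric $d_\psi$, and then exhibits an explicit countable dense set (rational convex combinations of Dirac masses at a dense sequence in $E$) for separability. Your argument is a self-contained unpacking of essentially the same ideas those references use: the transform $\Phi(\mu)=\psi\cdot\mu$ identifying $\mathfrak{M}_1^\psi(E)$ with the closed level set $\{\nu(1/\psi)=1\}$ in the Polish space $\mathfrak{M}_+(E)$ is precisely the device in F\"ollmer--Schied, and the truncation $\psi_L$ turning the scalar condition $\mu_n\psi\to\mu\psi$ into uniform tail control is the standard route to the sequential characterisation in Kr\"atschmer et al. For separability you take the cleaner general route (subspaces of separable metric spaces are separable) rather than writing down a specific dense set. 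One small point of logical order: your sequential equivalence between $d_\psi$-convergence and $\psi$-weak convergence only upgrades to equality of \emph{topologies} once you know the $\psi$-weak topology is metrizable, which you obtain from the embedding $\Phi$; so the Polishness argument logically precedes, rather than follows, the conclusion that $d_\psi$ generates the topology.
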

\begin{proof}
Corollary A.45 of \cite{foll3} gives us that $\mathfrak{M}_1^\psi(E)$ endowed with the $\psi$-weak topology is Polish, in particular, metrizable. Now, Lemma 3.4 in \cite{kra1}, gives us that a sequence converges $\psi$-weakly if and only if it converges in the distance $d_\psi$.

If $(e_j)_{j\ge 1}$ is a sequence dense in $E$, then the family of convex combinations (with rational weights) of $\delta_{e_j}$ is contained in $\mathfrak{M}_1^\psi(E)$ and it is dense in $\mathfrak{M}_1(E)$, with respect to Prohorov distance. Hence, the space $\mathfrak{M}_1^\psi(E)$ is separable when endowed with the relative weak topology.
%$\hfill \square$
\end{proof}

\begin{remark}\label{re:convergence_psi_weak_topology}
Note that, given a sequence $\mu_0,\mu_1,...$ in $\mathfrak{M}_1^\psi(E)$, then $\mu_n\rightarrow\mu_0$ in the $\psi$-weak topology, as $n\rightarrow +\infty$, if and only if $\mu_n\rightarrow\mu_0$ in the weak topology and $\mu_n\psi\rightarrow\mu_0 \psi$,
 as $n\rightarrow +\infty$.
\end{remark}

\

\noindent\emph{The measurable structure of $\mathfrak{M}_1^\psi(E)$.}
We denote by  $\mathscr{M}$  the Borel $\sigma$-algebra on $\mathfrak{M}_1(E)$ generated by the weak   topology and by $\mathscr{M}^\psi$  the Borel $\sigma$-algebra on $\mathfrak{M}_1^\psi(E)$ generated by the $\psi$-weak   topology.

Next, we collect some properties of $\mathscr{M}$ and $\mathscr{M}^\psi$. We recall that $\mathscr{M}$ has  the following characterization.

\begin{lemma} \label{le:standard_weak_projections}
The $\sigma$-algebra $\mathscr{M}$ is generated by the projections $\pi_B:\mu\mapsto \mu(B)$, defined for $\mu\in\mathfrak{M}_1(E)$, letting $B$ vary in $\mathscr{E}$.
\end{lemma}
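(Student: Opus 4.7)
The plan is to prove the two inclusions separately. Write $\mathscr{M}'$ for the $\sigma$-algebra on $\mathfrak{M}_1(E)$ generated by the maps $\pi_B:\mu\mapsto \mu(B)$, $B\in\mathscr{E}$; the goal is to check that $\mathscr{M}=\mathscr{M}'$.

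For the inclusion $\mathscr{M}'\subseteq\mathscr{M}$, I would show that each projection $\pi_B$ is measurable with respect to the Borel $\sigma$-algebra generated by the weak topology. For an open set $U\subseteq E$, the Portmanteau theorem tells us that $\mu\mapsto \mu(U)$ is lower semicontinuous on $\mathfrak{M}_1(E)$ with the weak topology, hence Borel measurable. To pass from open sets to arbitrary Borel sets, I would invoke Dynkin's $\pi$-$\lambda$ theorem: let $\mathscr{D}=\{B\in\mathscr{E}:\pi_B\ \text{is}\ \mathscr{M}\text{-measurable}\}$. Then $\mathscr{D}$ is closed under complementation (because $\pi_{B^c}=1-\pi_B$) and under countable disjoint unions (by countable additivity, $\pi_{\sqcup B_n}=\sum_n \pi_{B_n}$ is a pointwise countable sum of measurable functions), so it is a $\lambda$-system, and it contains the open sets of $E$, which form a $\pi$-system generating $\mathscr{E}$.

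For the reverse inclusion $\mathscr{M}\subseteq\mathscr{M}'$, I would exploit that the weak topology on $\mathfrak{M}_1(E)$ is Polish (via Prohorov), and in particular second countable. It therefore suffices to check that every sub-basic open set lies in $\mathscr{M}'$. A convenient sub-basis consists of the sets $\{\mu:\mu f<a\}$ for $f\in\mathfrak{C}_b(E)$ and $a\in\mathbb{R}$, so it is enough to show that $\mu\mapsto \mu f$ is $\mathscr{M}'$-measurable for each bounded continuous $f$. This is clear for indicators of Borel sets and extends by linearity to simple Borel functions; for general bounded Borel $f$ one approximates pointwise by simple functions and uses that pointwise limits of measurable functions are measurable. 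Second countability then upgrades sub-basic sets to all open sets.

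The main obstacle is the first inclusion, specifically the Dynkin step: while lower semicontinuity gives measurability of $\pi_U$ for open $U$ almost for free, one has to check carefully that the $\lambda$-system structure genuinely propagates $\mathscr{M}$-measurability through complements and countable disjoint unions (here the bound $0\le \pi_B\le 1$ is what makes the series argument clean). Once this is verified the second inclusion is routine bookkeeping.
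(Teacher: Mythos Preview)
Your argument is correct: the Portmanteau-plus-Dynkin step gives $\mathscr{M}'\subseteq\mathscr{M}$, and simple-function approximation together with second countability (Lindel\"of) of $\mathfrak{M}_1(E)$ gives the reverse inclusion. The paper does not actually prove this lemma---it merely cites Proposition~2.2.2 of Ghosh--Ramamoorthi---so there is no in-paper argument to compare against; what you have written is essentially the standard textbook proof that sits behind that citation.
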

\begin{proof}
See, e.g., Proposition 2.2.2. in \cite{ghosh}. % $\hfill \square$
\end{proof}

\begin{lemma}\label{le:mueasurability_generic_family}
Let $\mathfrak{H}$ be a family of functions defined on a set $H$ and taking values in a measurable space $(G,\mathscr{G})$. Let $\phi$ be a $H$-valued map defined on some set $H_0$, then $\phi^{-1}(\sigma(\mathfrak{H}))=\sigma(\mathfrak{H}\circ\phi)$ on $H_0$, where $\mathfrak{H}\circ \phi\triangleq \lbrace h\circ \phi : h\in\mathfrak{H}\rbrace$.
\end{lemma}
\begin{proof}
First of all, note that $\phi^{-1}(\sigma(\mathfrak{H}))$ is a $\sigma$-algebra on $H_0$, since the map $\phi^{-1}$ preserves all the set operations. Thus, the inclusion $\sigma(\mathfrak{H}\circ\phi)\subseteq\phi^{-1}(\sigma(\mathfrak{H}))$ is immediate, since $h\circ \phi$ is $\phi^{-1}(\sigma(\mathfrak{H}))$-measurable for any $h\in\mathfrak{H}$.

Let now $\mathscr{H}_0$ be a $\sigma$-algebra on $H_0$ with respect to which $h\circ \phi$ is $(\mathscr{H}_0,\mathscr{G})$-measurable, for any $h\in\mathfrak{H}$. Clearly $\phi^{-1}(\sigma(\mathfrak{H}))\subseteq \mathscr{H}_0$. Thus, the proof concludes by considering $\mathscr{H}_0=\sigma(\mathfrak{H}\circ\phi)$. %$\hfill \square$
\end{proof}

 The next lemma states that the relative weak topology and the $\psi$-weak  topology generate the same Borel $\sigma$-algebra on $\mathfrak{M}_1^{\psi}(E)$.
\begin{lemma}\label{le:Borel_psi_weak}
The $\sigma$-algebra $\; \mathscr{M}^\psi $ is generated by the relative weak topology $\; \; \; \sigma(\mathfrak{M}_1^{\psi}(E),\mathfrak{C}_b(E))$.
\end{lemma}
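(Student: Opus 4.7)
The plan is as follows. One inclusion is automatic: since the $\psi$-weak topology is finer than the relative weak topology on $\mathfrak{M}_1^\psi(E)$, every relative-weak-open set is $\psi$-weak-open, whence $\sigma(\mathfrak{M}_1^\psi(E),\mathfrak{C}_b(E))\subseteq\mathscr{M}^\psi$. The real content of the lemma is the reverse inclusion $\mathscr{M}^\psi\subseteq\sigma(\mathfrak{M}_1^\psi(E),\mathfrak{C}_b(E))$.

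For this I would exploit the explicit metric $d_\psi$ provided by Proposition \ref{prop:distance psi and polish}, together with the fact that $(\mathfrak{M}_1^\psi(E),d_\psi)$ is Polish, hence second countable. Fix a countable $d_\psi$-dense sequence $\{\nu_k\}_{k\ge 1}\subseteq\mathfrak{M}_1^\psi(E)$; then the family $\{B_{d_\psi}(\nu_k,q):k\ge 1,\,q\in\mathbb{Q}_{>0}\}$ is a countable base of the $\psi$-weak topology, and so generates $\mathscr{M}^\psi$. It therefore suffices to show that each such ball lies in $\sigma(\mathfrak{M}_1^\psi(E),\mathfrak{C}_b(E))$, which in turn reduces to proving that, for any fixed $\nu\in\mathfrak{M}_1^\psi(E)$, the map $\mu\mapsto d_\psi(\mu,\nu)$ is measurable with respect to the relative weak Borel structure.

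In view of the decomposition $d_\psi(\mu,\nu)=\pi(\mu,\nu)+|\mu\psi-\nu\psi|$, the first summand is continuous on $(\mathfrak{M}_1(E),\pi)$, and hence continuous in the relative weak topology as a function of $\mu$, so the crux is the measurability of $\mu\mapsto\mu\psi$. This is what I expect to be the main obstacle, since $\psi$ need not be bounded and hence $\mu\mapsto\mu\psi$ need not be continuous for the relative weak topology. However, it dissolves quickly by truncation: since $\psi\ge 1$ is continuous, the cut-offs $\psi\wedge n$ belong to $\mathfrak{C}_b(E)$ and increase pointwise to $\psi$, so by monotone convergence
\begin{equation*}
\mu\psi=\sup_{n\ge 1}\mu(\psi\wedge n)\qquad\text{for every }\mu\in\mathfrak{M}_1^\psi(E).
\end{equation*}
Each of the maps $\mu\mapsto\mu(\psi\wedge n)$ is continuous in the relative weak topology by the very definition of $\sigma(\mathfrak{M}_1^\psi(E),\mathfrak{C}_b(E))$, so $\mu\mapsto\mu\psi$ is a countable supremum of such continuous functions, hence lower semicontinuous and in particular Borel measurable for the relative weak topology. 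Putting the pieces together gives $\mathscr{M}^\psi\subseteq\sigma(\mathfrak{M}_1^\psi(E),\mathfrak{C}_b(E))$, completing the plan.
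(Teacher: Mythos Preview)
Your argument is correct, but it follows a genuinely different route from the paper's proof.

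The paper argues abstractly: it observes that the Borel $\sigma$-algebra $\mathscr{B}^\psi$ of the relative weak topology is countably generated and countably separated (because $\mathfrak{M}_1^\psi(E)$ is separable in the relative weak topology, by Proposition~\ref{prop:distance psi and polish}), and that $(\mathfrak{M}_1^\psi(E),\mathscr{M}^\psi)$ is a standard Borel space (again by Proposition~\ref{prop:distance psi and polish}). It then invokes a Blackwell--Mackey type result (Theorem~3.3 in \cite{mac}) to conclude that any countably generated, point-separating sub-$\sigma$-algebra of a standard Borel $\sigma$-algebra must coincide with the whole thing, giving $\mathscr{B}^\psi=\mathscr{M}^\psi$ at one stroke.

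Your approach is instead constructive: you exploit the explicit form of $d_\psi=\pi+|\,\cdot\,\psi-\cdot\,\psi\,|$, use second countability of the Polish space $(\mathfrak{M}_1^\psi(E),d_\psi)$ to reduce to a countable family of $d_\psi$-balls, and then show directly that $\mu\mapsto d_\psi(\mu,\nu)$ is $\mathscr{B}^\psi$-measurable via the truncation $\mu\psi=\sup_n\mu(\psi\wedge n)$. This is more elementary and entirely self-contained; it avoids any appeal to descriptive set theory. The paper's version is shorter but relies on a non-trivial external theorem. One small notational remark: in the paper $\sigma(\mathfrak{M}_1^\psi(E),\mathfrak{C}_b(E))$ denotes the relative weak \emph{topology} itself, not the Borel $\sigma$-algebra it generates (which the proof calls $\mathscr{B}^\psi$); you use the same symbol for the $\sigma$-algebra, which is harmless here but worth aligning with the ambient conventions.
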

Recall that a $\sigma$-algebra is said to be $(i)$ \emph{countably generated}if it is generated by a countable family of sets and $(ii)$ \emph{countably separated} if it admits an a countable family of sets separating points.
Moreover, a measurable space is said to be \emph{standard} if it is Borel-isomorphic to a Polish space.
\begin{proof}[Proof of Lemma \ref{le:Borel_psi_weak}]
Let us define $\mathscr{B}^\psi$ to be the Borel $\sigma$-algebra associated to the relative weak topology $\sigma(\mathfrak{M}_1^\psi(E),\mathfrak{C}_b(E))$. Since the $\psi$-weak  topology is finer than the weak   topology on $\mathfrak{M}_1^{\psi}(E)$, one has $\mathscr{B}^\psi\subseteq \mathscr{M}^\psi$.

On the other hand,  Proposition \ref{prop:distance psi and polish} gives us that $\mathfrak{M}_1^\psi(E)$ is separable when endowed with the relative weak topology. As a result, the $\sigma$-algebra $\mathscr{B}^\psi$ is countably generated and countably separated. Indeed, any countable base $\mathfrak{B}^\psi$ of open sets in $\sigma(\mathfrak{M}_1^\psi(E),\mathfrak{C}_b(E))$ generates the $\sigma$-algebra $\mathscr{B}^\psi$ and separates points, (cf. \cite{bog2}, \S 6.5).

Finally, Proposition \ref{prop:distance psi and polish} again  implies that the space $(\mathfrak{M}_1^\psi(E),\mathscr{M}^\psi)$ is standard, and thus, the $\sigma$-algebra $\mathscr{M}^\psi$ coincides with $\sigma(\mathfrak{B}^\psi)$, thanks to Theorem 3.3 in \cite{mac}. %$\hfill \square$
\end{proof}

The Borel $\sigma$-algebra $\mathscr{M}^\psi$ admits a characterization in terms of the projections $\pi_B$ analogous to that of  Lemma \ref{le:standard_weak_projections} for $\mathscr{M}$. This is the content of the next proposition.

\begin{proposition}\label{pr:Borel_psi_weak}
The Borel $\sigma$-algebra $\mathscr{M}^\psi$ is generated by the projections $\pi_B: \mu \mapsto \mu(B)$, defined for $\mu\in\mathfrak{M}_1^\psi(E)$, letting $B$ vary in $\mathscr{E}$.
\end{proposition}
\begin{proof}
Let $\phi:\mathfrak{M}_1^\psi(E)\hookrightarrow \mathfrak{M}_1(E)$ be the  inclusion of $\mathfrak{M}_1^\psi(E)$ into $\mathfrak{M}_1(E)$ and define $\mathfrak{H}$ to be the family consisting of the projection maps $\pi_B:\mu\in\mathfrak{M}_1(E)\rightarrow \mu(B)$, letting $B$ vary in $\mathscr{E}$. The family $\mathfrak{H}\circ\phi\triangleq \lbrace \pi_B\circ\phi : B\in\mathscr{E}\rbrace$ consists of the projections defined on $\mathfrak{M}_1^\psi(E)$.

Let us define $\mathscr{B}^\psi$ to be the Borel $\sigma$-algebra generated by the restriction $\sigma(\mathfrak{M}_1^\psi(E),\mathfrak{C}_b(E))$ of the weak   topology to $\mathfrak{M}_1^\psi(E)$. Equality $\mathscr{B}^\psi=\phi^{-1}(\sigma(\mathfrak{H}))$ holds true, since $\sigma(\mathfrak{H})=\mathscr{M}$ due to Lemma \ref{le:standard_weak_projections} and $\mathscr{B}^\psi=\phi^{-1}(\mathscr{M})$. Hence, applying Lemma \ref{le:mueasurability_generic_family}, one deduces that $\mathscr{B}^\psi=\sigma(\mathfrak{H\circ\phi})$. The stated result now follows from Lemma \ref{le:Borel_psi_weak}. %$\hfill \square$
\end{proof}

\section{Setup} \label{sec:setup}

We now introduce our reference probability space $(\Omega, \mathscr{F}, \mathbb{P})$ for our asymptotic stability results.

We let $\Omega$ denote the  set $\Omega=E^{\mathbb{N}}$ of all the sequences $\omega=(\omega_1,\omega_2,...)$ of elements of {a Polish space} $E$. The projections $\xi_1,\xi_2,...$ are the mappings $\omega\mapsto \xi_n(\omega)\triangleq \omega_n$, for  $\omega\in\Omega$ and $n \ge 1$.

We let $\mathscr{F}$ denote the $\sigma$-algebra in $\Omega$ generated by the projections $\xi_1,\xi_2,...$. This family $\mathscr{F}$ is also the Borel $\sigma$-algebra associated to the product topology in $E^{\mathbb{N}}$; it also coincides, since $E$ is separable, with  the product $\sigma$-algebra $\mathscr{E}^{\mathbb{N}}$, (cf. \cite{par}, Theorem 1.10).

We let $\mathbb{P}$ be a  probability measure defined on $(\Omega, \mathscr{F})$, so that $(\Omega, \mathscr{F}, \mathbb{P})$ is a complete probability space.

\noindent\emph{Random measures.}
By a random measure $\chi$ on $(E,\mathscr{E})$ with support in $\mathfrak{M}_1^\psi(E)$ we understand  a probability kernel $$
(\omega,B)\in \Omega\times \mathscr{E} \mapsto \chi(\omega,B)\in [0,1]\, ,$$
such that
\begin{itemize}
\item[$(i)$] the assignment $B\in \mathscr{E}\mapsto \chi(\omega, B)$ defines a probability measure in $\mathfrak{M}_1^\psi(E)$, for each fixed $\omega \in \Omega$,
\item[$(ii)$]  the mapping $\omega\in \Omega \mapsto \chi(\omega, B)$ is $\mathscr{F}$-measurable, for each fixed $B \in \mathscr{E}$.
\end{itemize}

Besides, Proposition \ref{pr:Borel_psi_weak} allows to understand $\chi$ as a random variable on $(\Omega,\mathscr{F},\mathbb{P})$ and taking values in $(\mathfrak{M}_1^\psi(E),\mathscr{M}^\psi)$.

We shall denote by $\mathscr{L}(\chi)$  the distribution induced by $\chi$ on $(\mathfrak{M}_1^\psi(E),\mathscr{M}^\psi)$ as a pullback in the usual way:
$$
\mathscr{L}(\chi)(M)=\mathbb{P}\circ\chi^{-1}(M), \quad \mbox{for any $M \in \mathscr{M}^{\psi}$}.
$$

\noindent\emph{Empirical process.} The \emph{empirical process associated to the projections $\xi$} is the sequence $m_1,m_2,...$ of random measures defined, for each $n \ge 1$, by
\begin{equation} \label{eq:empirical_process}
m_n(\omega, B)\triangleq \frac{1}{n}\sum_{i=1}^n \delta_{\xi_i(\omega)}(B), \quad  \mbox{for any $\omega\in \Omega$ and any $B\in \mathscr{E}$.}
\end{equation}
Moreover, we say that the empirical process $m_1,m_2,...$ is directed by the variables $\xi_1,\xi_2,...$
We shall always understand each $m_n$ as a random variable defined on $(\Omega, \mathscr{F},\mathbb{P})$ and with values in $(\mathfrak{M}_1^\psi(E),\mathscr{M}^\psi)$.

\noindent\emph{Statistics and estimators.} Let $T$ be a further Polish space endowed with its Borel $\sigma$-algebra $\mathscr{T}$ and with a metric $d_T$ which induces its topological structure.

  Any $(\mathscr{M}^\psi,\mathscr{T})$-measurable functional $\tau:\mathfrak{M}_1^\psi(E)\rightarrow T$ is termed a \emph{statistic} on $\mathfrak{M}_1^\psi(E)$.

  The sequence of random variables $\tau_1,\tau_2,..$ from $(\Omega,\mathscr{F},\mathbb{P})$ into $(T,\mathscr{T})$ obtained by setting, for each $n \ge 1$,
  $$
    \tau_n\triangleq \tau(m_n),$$ is called the sequence of \emph{estimators} induced by $\tau$.

The statistic $\tau:\mathfrak{M}_1^\psi(E)\rightarrow T$ is said to be $\psi$-continuous if it is continuous with respect to the $\psi$-weak  topology on $\mathfrak{M}_1^\psi(E)$ and the topology defined on $T$.
Besides,  $\tau$ is said to be \emph{uniformly} $\psi$-continuous if for any $\varepsilon>0$ there exists $\delta(\varepsilon)>0$ such that $d_T(\tau(\mu_1),\tau(\mu_2))< \varepsilon$ if $d_\psi(\mu_1,\mu_2)<\delta(\varepsilon)$.
Note that, given a random measure $\chi$ on $(E,\mathscr{E})$ with support in $\mathfrak{M}_1^\psi(E)$ and a $\psi$-continuous statistic $\tau$ on $\mathfrak{M}_1^\psi(E)$, the composition $\tau(\chi)$ is $(\mathscr{F},\mathscr{T})$-measurable.
\begin{definition}[Strong Consistency]
Given a statistic $\tau:\mathfrak{M}_1^\psi(E)\rightarrow T$ and a random measure $\chi$ on $(E,\mathscr{E})$, we say that $\tau$, or equivalently, the sequence of estimators $(\tau_n)_n$ induced by $\tau$, is \emph{strongly consistent} for $\tau(\chi)$ if one has $\mathbb{P}$-almost surely that {$\tau_n\rightarrow\tau(\chi)$}, as $n\rightarrow +\infty$.
\end{definition}

\section{Consistency} \label{sec:consistency}

Let $(\Omega,\mathscr{F},\mathbb{P})$ be the complete probability space introduced in the previous section.

\noindent\emph{Stationarity.} We denote by $\Sigma$ the shift operator on $E^\mathbb{N}$, i.e.
$$
\Sigma(x_1,x_2,\ldots)= (x_2,x_3,\ldots)\, , \quad \mbox{for any $(x_1, x_2, \ldots)\in E^\mathbb{N}$}\, .$$

The random sequence $\xi\triangleq (\xi_1,\xi_2,...)$ in $(E,\mathscr{E})$ given by the canonical projections is said to be \emph{stationary} if one has $\mathscr{L}(\xi)=\mathscr{L}(\Sigma\xi)$.

Here and in what follows, $\mathscr{L}(\xi)\triangleq\mathbb{P}\circ \xi^{-1}$ denotes the distribution on $(E^\mathbb{N},\mathscr{E}^\mathbb{N})$ induced by the random sequence $\xi$.

\noindent\emph{The shift invariant $\sigma$-algebra.} The \emph{shift invariant $\sigma$-algebra} is defined to be the collection $\mathscr{I}$ of the Borel sets $I\in\mathscr{E}^\mathbb{N}$ such that $\Sigma^{-1}(I)=I$, {and plays a crucial role in what follows.}

Observe that if the variables $\xi_n$ are i.i.d, then the $\sigma$-algebra {$\mathscr{I}$} turns out to be $\mathbb{P}$-trivial, see, e.g., Corollary 1.6 in \cite{kal3}.

\noindent\emph{Canonical random measure.} We now introduce the canonical random measure associated to the sequence $\xi$.
\begin{lemma}\label{le:regular_version_conditional}
There exists an essentially unique regular version $\upsilon$ of the conditional distribution $\mathbb{P}[\xi_1\in \cdot \ \vert {\mathscr{I}}]$.
\end{lemma}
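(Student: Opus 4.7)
The proof will be a direct appeal to the standard existence theorem for regular conditional distributions, applied to the target $(E,\mathscr{E})$ and the sub-$\sigma$-algebra $\xi^{-1}\mathscr{I}\subseteq\mathscr{F}$.

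\medskip

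\emph{Plan.} The space $E$ is Polish by assumption, so $(E,\mathscr{E})$ is a standard Borel space. First, I would observe that $\xi_1:(\Omega,\mathscr{F},\mathbb{P})\to(E,\mathscr{E})$ is a measurable map into a standard Borel space, and that $\xi^{-1}\mathscr{I}$ is a sub-$\sigma$-algebra of $\mathscr{F}$. Under these assumptions the existence of a regular version $\upsilon$ of the conditional law $\mathbb{P}[\xi_1\in\,\cdot\,\mid\xi^{-1}\mathscr{I}]$ is a textbook result (see, e.g., Kallenberg \cite{kal3}, Theorem 6.3, or the disintegration theorem in Dudley \cite{dud}, Theorem 10.2.2). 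Concretely, one selects a Borel isomorphism from $E$ to a Borel subset of a compact metric space, transports $\xi_1$ to the real line (or $[0,1]$), constructs a regular version of the conditional distribution function by taking rational-argument versions of $\mathbb{P}[\xi_1\le q\mid\xi^{-1}\mathscr{I}]$ and modifying them on a $\mathbb{P}$-null set so that monotonicity and right-continuity hold everywhere, and then pulls the resulting kernel back to $E$ via the inverse isomorphism.

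\medskip

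\emph{Essential uniqueness.} Suppose $\upsilon$ and $\upsilon'$ are two regular versions. By Proposition \ref{pr:Borel_psi_weak} (or Lemma \ref{le:standard_weak_projections}) the Borel structure on $\mathfrak{M}_1(E)$ is generated by the projections $\pi_B$, $B\in\mathscr{E}$. Since $E$ is Polish, its Borel $\sigma$-algebra $\mathscr{E}$ is countably generated; pick a countable $\pi$-system $\mathfrak{D}\subseteq\mathscr{E}$ generating $\mathscr{E}$ (e.g., finite intersections of a countable base of open balls). For each $D\in\mathfrak{D}$ one has $\upsilon(\cdot,D)=\upsilon'(\cdot,D)$ $\mathbb{P}$-almost surely by definition of regular version, and taking the intersection over the countable family $\mathfrak{D}$ one obtains a $\mathbb{P}$-full set on which $\upsilon(\omega,\cdot)$ and $\upsilon'(\omega,\cdot)$ agree on $\mathfrak{D}$. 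A Dynkin/monotone-class argument then extends the equality to all of $\mathscr{E}$, i.e. $\upsilon(\omega,\cdot)=\upsilon'(\omega,\cdot)$ as measures for $\mathbb{P}$-a.e.\ $\omega$.

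\medskip

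\emph{Main obstacle.} There is no serious obstacle: the construction is entirely standard once one recognises the setting as the classical one (target space standard Borel, sub-$\sigma$-algebra arbitrary, underlying measure complete). The only point that requires a bit of care is the need to patch together the countably many almost-sure statements — one for each member of a countable generator of $\mathscr{E}$ — into a single null set off which $\upsilon(\omega,\cdot)$ is genuinely a probability measure on $E$; this is precisely what the standard construction via conditional distribution functions on $[0,1]$ (followed by transport back to $E$) achieves, and why the Polish (equivalently, standard Borel) hypothesis on $E$ cannot be dispensed with.
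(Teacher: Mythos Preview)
Your proposal is correct and follows essentially the same route as the paper: both reduce the lemma to the standard existence and essential uniqueness theorem for regular conditional distributions, valid because $E$ is Polish (hence $(E,\mathscr{E})$ is standard Borel and $\mathscr{E}$ is countably generated). The paper simply cites Bogachev (Lemma~10.4.3 and Corollary~10.4.6) for this, while you spell out the construction and the Dynkin-class uniqueness argument in more detail.
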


Recall that $\upsilon$ is by definition a ${\mathscr{I}}$-measurable random probability measure over $(E,\mathscr{E})$, i.e. {in particular} a probability kernel $(\omega,B)\mapsto \upsilon (\omega,B)$ from the probability space $(\Omega,\mathscr{F},\mathbb{P})$ to $(E,\mathscr{E})$. In other terms, $\upsilon$ is what we have termed a random measure.
We refer to \S10.4 of  \cite{bog2} for background and relevance {of} regular version of conditional distributions.
\begin{proof}[Proof of Lemma \ref{le:regular_version_conditional}]
See, e.g., Lemma 10.4.3 and Corollary 10.4.6 in \cite{bog2}. Recall that $E$ is Polish and $\mathscr{E}$ is Borel, hence countably generated. %$\hfill \square$
\end{proof}

In the remainder of this paper, we refer to $\upsilon$ as the \emph{canonical random measure} associated to $\xi$.

Note that, in the case when the projections $\xi_n$ are independent with common distribution $\mu$, so that $\mathscr{L}(\xi_n)=\mu$ for each $n \ge 1$, then $\mathbb{P}$-almost surely $\upsilon=\mu$. This is so because ${\mathscr{I}}$ is $\mathbb{P}$-trivial and then
\begin{equation}
\mathbb{P}[\xi_1\in \cdot \vert {\mathscr{I}}]=\mathbb{P}[\xi_1\in \cdot]=\mu, \hspace{1cm} \text{$\mathbb{P}$-a.s.}
\end{equation}

\

 Observe that  when $\mathscr{L}(\xi_1)\in \mathfrak{M}_1^\psi(E)$, i.e. when $\int_E \psi(x) \ \mathbb{P}\circ \xi_1^{-1}(dx)<+\infty$, we have $\mathbb{P}$-almost surely that $\upsilon\in \mathfrak{M}_1^\psi(E)$.

{In next paragraphs we address the convergence of estimators.}

When the random variables $\xi_1,\xi_2,...$ are independent and identically distributed, Varadarajan's theorem (which we record below as Proposition \ref{le:varadarajan}) asserts the convergence in the weak topology of the empirical process.
\begin{proposition}\label{le:varadarajan}
Assume that the variables $\xi_1,\xi_2,...$ are independent with common law $\mu\in\mathfrak{M}_1(E)$, then, $\mathbb{P}$-almost surely $m_n\rightarrow \mu$ in the weak topology, as $n\rightarrow\infty$.
\end{proposition}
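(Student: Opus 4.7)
The plan is to reduce almost-sure weak convergence to the almost-sure convergence of countably many real-valued averages, and then handle that countable family with the classical strong law of large numbers. Since the bounded Lipschitz metric $\beta$ generates the weak topology (cf. the discussion around Theorem 11.3.3 in \cite{dud}), it is enough to show that $\beta(m_n,\mu)\to 0$ almost surely, which amounts to proving that
\[
\sup_{g\in \mathfrak{BL}_1(E)} |(m_n-\mu)g|\longrightarrow 0\qquad \mathbb{P}\text{-a.s.}
\]
The supremum is taken over an uncountable family, so the core difficulty is to obtain a single $\mathbb{P}$-null exceptional set valid for all test functions $g$ at once.

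My first step would be to invoke Remark \ref{re:equivalent_metric_tot_bounded} to replace the given distance on $E$ by an equivalent totally bounded one; by that same remark this does not affect the weak topology nor the induced $\beta$-metric (up to equivalence). Under this totally bounded metric, Proposition \ref{pr:separable_unit_ball} provides a countable family $\mathfrak{N}=\{f_k : k\geq 1\}\subset \mathfrak{BL}_1(E)$ that is dense in $\mathfrak{BL}_1(E)$ for the supremum norm. Since each $f_k$ is bounded and measurable and $\xi_1,\xi_2,\ldots$ are i.i.d.\ with law $\mu$, the classical Kolmogorov strong law of large numbers yields
\[
m_n f_k=\frac{1}{n}\sum_{i=1}^n f_k(\xi_i)\longrightarrow \mu f_k\qquad \mathbb{P}\text{-a.s.}
\]
Denoting the corresponding full measure event by $\Omega_k$ and setting $\Omega_0\triangleq \bigcap_{k\geq 1}\Omega_k$, countable subadditivity gives $\mathbb{P}(\Omega_0)=1$, and on $\Omega_0$ the convergence $m_n f_k\to \mu f_k$ holds simultaneously for every $k$.

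To pass from this countable family to the full unit ball $\mathfrak{BL}_1(E)$, I would use the standard $3\varepsilon$ triangle inequality: for any $g\in \mathfrak{BL}_1(E)$ and any $f_k\in\mathfrak{N}$,
\[
|(m_n-\mu)g|\leq |(m_n-\mu)(g-f_k)|+|(m_n-\mu)f_k|\leq 2\Vert g-f_k\Vert_\infty+|(m_n-\mu)f_k|,
\]
using that $m_n$ and $\mu$ are probability measures. Fix $\omega\in \Omega_0$ and $\varepsilon>0$. Density of $\mathfrak{N}$ lets us choose, for each $g$, some $f_k$ with $\Vert g-f_k\Vert_\infty<\varepsilon$; but to take the supremum over $g$ we prefer a uniform selection. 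Here I would exploit the fact, implicit in the proof of Proposition \ref{pr:separable_unit_ball}, that $\mathfrak{BL}_1(E)$ embeds isometrically (for $\Vert\cdot\Vert_\infty$) into the compact set $\mathfrak{BL}_1(\bar E)$ (compactness via Ascoli--Arzel\'a on the compact completion $\bar E$), hence $\mathfrak{BL}_1(E)$ is totally bounded in the sup norm. Consequently $\mathfrak{N}$ contains a finite $\varepsilon$-net $\{f_{k_1},\ldots,f_{k_N}\}$, and the previous inequality yields
\[
\beta(m_n,\mu)\leq 2\varepsilon+\max_{j=1,\ldots,N}|(m_n-\mu)f_{k_j}|.
\]
On $\Omega_0$ the maximum vanishes as $n\to\infty$, so $\limsup_n \beta(m_n,\mu)\leq 2\varepsilon$; since $\varepsilon$ is arbitrary this concludes the argument.

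The main obstacle, as anticipated, is the uncountable supremum defining $\beta$; everything else is routine once Proposition \ref{pr:separable_unit_ball} is available. The small subtlety worth being careful about is that the $\varepsilon$-net must be extracted from the countable set $\mathfrak{N}$ on which the strong law has already been applied, which is why I would insist on total boundedness (not merely separability) of $\mathfrak{BL}_1(E)$ in the sup norm, and that in turn is why switching to a totally bounded consistent metric on $E$ at the outset is the right opening move.
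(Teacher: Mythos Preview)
Your proof is correct. The paper itself does not give a proof of this proposition at all: it simply cites Theorem 11.4.1 in Dudley's \textit{Real Analysis and Probability}. Your argument is precisely the classical proof one finds there---reduce to a totally bounded consistent metric, extract a finite $\varepsilon$-net in $\mathfrak{BL}_1(E)$ for the sup norm via Ascoli--Arzel\`a on the compact completion $\bar E$, apply the strong law to the finitely many net functions, and conclude by the $3\varepsilon$ estimate. You were also right to flag that mere separability of $\mathfrak{BL}_1(E)$ is not quite enough for the last step and that total boundedness is what makes the uniform-in-$g$ bound go through; this is exactly the subtlety Dudley handles.
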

\begin{proof}
See, e.g., Theorem 11.4.1 in \cite{dud}. %$\hfill \square$
\end{proof}

{Analogously, for the $\psi$-weak topology in $\mathfrak{M}_1^\psi(E)$ we also have the following Varadarajan type theorem, which will be proved later by using Propositions \ref{pr:UGC_ball} and \ref{pr:varadarajan_std_weak}.}

{
\begin{theorem}\label{th:varadarajan_type}
If $\xi$ is stationary and such that $\mathscr{L}(\xi_1)\in\mathfrak{M}_1^{\psi}(E)$, then $\mathbb{P}$-almost surely $m_n\rightarrow \upsilon$  in the $\psi$-weak topology, as $n\rightarrow +\infty$.
\end{theorem}
}

Recall that a family $\mathfrak{G}$ of Borel functions on $E$ is said to be an \emph{universal Glivenko-Cantelli} class if
\begin{equation}
\sup \lbrace\vert (m_n - \mu) f \vert : f\in\mathfrak{G}\rbrace \rightarrow 0, \hspace{1cm} \text{ $\mathbb{P}$-a.s. as $n\rightarrow +\infty$,} \nonumber
\end{equation}
whenever the variables $\xi_1,\xi_2,...$ that direct the empirical process $(m_n)_n$ are independent with common generic distribution $\mu \in\mathfrak{M}_1(E)$.

\
%
%***************
%Recall that we denoted by $\mathfrak{BL}_1(E)$ the unit ball in the space $\mathfrak{BL}(E)$ defined by the norm $\Vert \cdot \Vert_{\mathfrak{BL}(E)}$. In this respect, we also recall that it is not restrictive assuming the space $E$ to be totally bounded. \\
%We have the following characterization.
%
%
%*************
\begin{proposition}\label{pr:UGC_ball}
The unit ball $\mathfrak{BL}_1(E)$ constitutes an universal Glivenko-Cantelli class.
\end{proposition}
\begin{proof}
Firstly, if $\xi_1,\xi_2,...$ are independent with common generic distribution $\mu\in\mathfrak{M}_1(E)$, then, according to Lemma \ref{le:varadarajan}, $\mathbb{P}$-almost surely $m_n\rightarrow \mu$ in the weak topology as $n\rightarrow +\infty$. Therefore, as discussed in Section \ref{sec:weak_topology_bounded_lip}, we obtain that $\beta(m_n, \mu) \to 0$, as $n \to +\infty$. %$\hfill \square$
\end{proof}

\begin{proposition}\label{pr:varadarajan_std_weak}
If $\xi$ is stationary, then $\mathbb{P}$-almost surely $m_n\rightarrow \upsilon$ in the  weak topology, as $n\rightarrow +\infty$.
\end{proposition}
\begin{proof}
According to Remark \ref{re:equivalent_metric_tot_bounded}, we now use a totally bounded metric on $E$ to define the norm $\Vert\cdot \Vert_{\mathfrak{BL}_1(E)}$. The unit ball $\mathfrak{BL}_1(E)$ is an uniformly bounded family of Borel functions on $E$. Moreover, it is separable for the supremum norm, due to Proposition \ref{pr:separable_unit_ball}.

Thus, since $\mathfrak{BL}_1(E)$ forms an universal Glivenko-Cantelli class according to Proposition \ref{pr:UGC_ball}, Theorem 1.3 combined to Corollary 1.4 in \cite{van} apply, and in particular we have
\begin{equation}\label{eq:convergence_UGC_stationary}
\sup\lbrace\vert (m_n-\upsilon)f\vert : f\in\mathfrak{BL}_1(E) \rbrace \rightarrow 0, \hspace{1cm} \text{$\mathbb{P}$-a.s. as $n\rightarrow +\infty$}
\end{equation}
The proof now concludes since \eqref{eq:convergence_UGC_stationary} gives that $\beta(m_n, \upsilon)\to 0 $ almost surely, as $n \to +\infty$, and so that   $\mathbb{P}$-almost surely $m_n\rightarrow \upsilon$ in the weak topology, as $n\rightarrow +\infty$. %$\hfill \square$
\end{proof}

\begin{proof}[{Proof of Theorem \ref{th:varadarajan_type}}]
According to Proposition \ref{pr:varadarajan_std_weak}, we have that $\mathbb{P}$-almost surely $m_n\rightarrow \upsilon$ in the relative weak topology, as $n\rightarrow +\infty$. Thus, in view of Remark \ref{re:convergence_psi_weak_topology}, it remains to show that
\begin{equation}
\vert m_n \psi - \upsilon\psi \vert \rightarrow 0, \hspace{1cm} \mbox{$\mathbb{P}$-almost surely as $n\rightarrow +\infty$}. \nonumber
\end{equation}

We apply now \emph{von Neumann's version of Birkhoff's ergodic  theorem} as stated in \cite{kal}, Theorem 9.6. Using the notation therein, consider as space $S\triangleq \Omega= E^{\mathbb{N}}$, as transformation $T\triangleq \Sigma$,  the shift operator defined on it,  and as measurable function $f\triangleq \psi \circ \xi_1$.

Notice that the shift operator $T$ preserves the measure $\mathbb{P}$  since $\xi$ is assumed to be stationary. If we write $\mu$ for the common law $\mu=\mathscr{L}(\xi_1)$, then a change of variables gives that
 $$
 \int_S f d\mathbb{P}=\int_\Omega \psi(\xi_1) d\mathbb{P}=\int_E \psi(x) \,  \mathbb{P}\circ \xi_1^{-1}(dx)=\int_E \psi\, d\mu\, . $$ Since by hypothesis $\mathscr{L}(\xi_1)=\mu \in \mathfrak{M}_1^\psi(E)$, we have $
\int_E \psi\, d\mu=\mu \psi <+\infty$ and therefore  $f\in L^1(\Omega,\mathscr{F},\mathbb{P})$.

Hence, we conclude that
\begin{equation}
m_n \psi\rightarrow \mathbb{E}[ \psi(\xi_1)\vert {\mathscr{I}}], \hspace{1cm} \mbox{$\mathbb{P}$-almost surely as $n\rightarrow +\infty$.} \nonumber
\end{equation}
Finally, according to the \emph{disintegration theorem} (cf. \cite{kal}, Theorem 5.4), we may recast  the limit variable and  write
\begin{equation}
\mathbb{E}[ \psi(\xi_1)\vert {\mathscr{I}}]= \int_E \psi\, d\mathbb{P}[\xi_1\in \cdot \vert {\mathscr{I}} ]= \upsilon \psi, \hspace{1cm} \mbox{$\mathbb{P}$-almost surely.
} \nonumber
\end{equation} %$\hfill \square$
\end{proof}

\begin{remark}[\emph{Ergodicity}]
Under the same hypotheses of Theorem \ref{th:varadarajan_type}, but assuming additionally that $\xi$ is ergodic, or equivalently that its distribution $\mathscr{L}(\xi)$ is ergodic with respect to the shift operator $\Sigma$, i.e. $\mathbb{P}\lbrace\xi\in I\rbrace\in\lbrace 0,1\rbrace$ for any $I\in \mathscr{I}$, we easily get that $\mathbb{P}$-almost surely the empirical process $(m_n)_n$ converges to $\mu=\mathscr{L}(\xi_1)$ in the $\psi$-weak topology, as $n\rightarrow +\infty$, since the $\sigma$-field $\xi^{-1}\mathscr{I}$ turns out to be $\mathbb{P}$-trivial in such a case.
\end{remark}

The next result is an immediate consequence of Theorem \ref{th:varadarajan_type}.
\begin{corollary}[\emph{Strong Consistency}]\label{co:consisency}
If $\xi$ is stationary such that $\mathscr{L}(\xi_1)\in \mathfrak{M}_1^{\psi}(E)$ and $\tau:\mathfrak{M}_1^{\psi}(E)\rightarrow T$ is $\psi$-continuous, then the sequence of estimators $(\tau_n)_n$ is strongly consistent for $\tau(\upsilon)$.
\end{corollary}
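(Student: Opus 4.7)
The plan is essentially to combine Theorem \ref{th:varadarajan_type} with the continuity of $\tau$, so the proof amounts to a single line. Concretely, here is how I would lay it out.

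First, I would invoke Theorem \ref{th:varadarajan_type} to obtain that, under the hypothesis that $\xi$ is stationary (with $\mathscr{L}(\xi_1)\in\mathfrak{M}_1^\psi(E)$ understood so that $\tau(\upsilon)$ makes sense), there exists a measurable set $\Omega_0\subseteq\Omega$ with $\mathbb{P}(\Omega_0)=1$ such that for every $\omega\in\Omega_0$ the sequence $m_n(\omega)$ converges to $\upsilon(\omega)$ in the $\psi$-weak topology of $\mathfrak{M}_1^\psi(E)$.

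Next, since $\tau:\mathfrak{M}_1^\psi(E)\to T$ is $\psi$-continuous by assumption, the sequential characterisation of continuity transports this convergence to $T$: for each $\omega\in\Omega_0$,
\begin{equation*}
\tau_n(\omega)=\tau(m_n(\omega))\longrightarrow \tau(\upsilon(\omega))\quad\text{in the topology of }T,\ \text{as }n\to+\infty.
\end{equation*}
Since $\Omega_0$ has full probability, this is exactly the statement that $\tau_n\to\tau(\upsilon)$ $\mathbb{P}$-almost surely, i.e. strong consistency of $(\tau_n)_n$ for $\tau(\upsilon)$ in the sense of the definition given at the end of Section \ref{sec:setup}.

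There is no real obstacle here; the only points worth noting for completeness are that each $m_n$ lies in $\mathfrak{M}_1^\psi(E)$ automatically (the sum $m_n\psi=n^{-1}\sum_{i\le n}\psi(\xi_i)$ is finite for every $\omega$, since $\psi$ is real-valued) so that $\tau(m_n)$ is well-defined, and that $\tau(\upsilon)$ is measurable as a map on $(\Omega,\mathscr{F},\mathbb{P})$ thanks to the remark, just before the definition of strong consistency, that $\tau(\chi)$ is $(\mathscr{F},\mathscr{T})$-measurable whenever $\chi$ is a random measure with support in $\mathfrak{M}_1^\psi(E)$ and $\tau$ is $\psi$-continuous. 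With these two remarks in place, the conclusion is immediate from Theorem \ref{th:varadarajan_type}.
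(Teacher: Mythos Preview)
Your proposal is correct and matches the paper's approach: the paper states that the corollary ``is an immediate consequence of Theorem \ref{th:varadarajan_type}'' and gives no further proof, and your argument---apply Theorem \ref{th:varadarajan_type} to get $m_n\to\upsilon$ $\psi$-weakly on a full-measure set, then push through the $\psi$-continuous $\tau$---is exactly the intended one-liner. Your additional remarks on well-definedness and measurability are accurate and harmless elaborations.
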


{
\begin{remark}
The proofs of Theorem \ref{th:varadarajan_type} and Corollary \ref{co:consisency} are in line with the one of Theorem 2.6 in \cite{kra}, which further assume the sequence $\xi$ to be ergodic. In particular, within the cited work the role of the map $\tau$ in Corollary \ref{co:consisency} is recovered by a law-invariant convex risk measure and Theorem 6.6 in \cite{par} is considered instead of Corollary 1.4 in \cite{van}.
\end{remark}
}

Suppose that $\xi$ describes the outcome in a sequence of trials. According to the present framework, any element of the sample space $\Omega\triangleq E^\mathbb{N}$ may be understood as a
 path of $\xi$. On the other hand,
the random measure $\upsilon$ is a regular version of the distribution induced by the single variable within the process $\xi$, conditioned on the information encoded {by} the shift invariant $\sigma$-algebra $\mathscr{I}$. In this respect, each $\omega\in\Omega$ completely describes the limit distribution $\upsilon(\omega,\cdot \ )$, which may be understood as the best available description of the outcomes.

\section{{Asymptotic stability}}\label{sec:robustness}

{Let $\theta$ be a $\mathscr{E}$-measurable endomorphism over $E$, {i.e. a $(\mathscr{E},\mathscr{E})$-measurable map from $E$ to itself}, and set
\begin{equation}\label{eq:perturbation_probaility}
\lambda_{{\mathbb{P}},\theta}(\alpha)\triangleq \mathbb{P}\lbrace d_\psi (\upsilon,\upsilon\circ\theta^{-1})>\alpha\rbrace, \hspace{1cm} \text{for any $\alpha >0$,}
\end{equation}
where the random variable $\upsilon\circ\theta^{-1}$ is defined by 
\begin{equation}\label{eq:upsilon_theta}
(\omega,B)\in \Omega\times \mathscr{E} \mapsto (\upsilon\circ\theta^{-1})(\omega,B)\triangleq \upsilon(\omega,\theta^{-1}(B)).
\end{equation}
} 
Note that the function (\ref{eq:perturbation_probaility}) is well defined, since  $d_\psi$ is trivially $(\mathscr{M}^\psi\otimes\mathscr{M}^\psi)$-measurable.
Moreover, observe that $\lambda_\theta$ is a decreasing function in $\alpha>0$ and that  $\lambda_{{\mathbb{P}},\theta} (\alpha)\rightarrow 1$ as $\alpha\rightarrow 0$ and $\lambda_{{\mathbb{P}},\theta}(\alpha)\rightarrow 0$ as $\alpha\rightarrow +\infty$, via monotonicity arguments.

{Assume that the statistic $\tau$ is uniformly $\psi$-continuous and that  $\kappa$ is a modulus of continuity of $\tau$, i.e. a  continuous strictly increasing map $[0,+\infty]\rightarrow [0,+\infty]$ such that
\begin{equation}
d_\tau(\tau(\mu_1),\tau(\mu_2))\leq \kappa(d_\psi(\mu_1,\mu_2)), \hspace{1cm} \text{for any $\mu_1,\mu_2\in \mathfrak{M}_1^\psi(E)$.} \nonumber
\end{equation}
}

{
Note that, since $\lambda_{\mathbb{P},\theta}(\alpha) \leq \kappa(\alpha)$ for $\alpha$ large enough, we are allowed to set}

\begin{equation}\label{eq:norm_theta}
\Vert\theta \Vert_{\mathbb{P},\kappa}\triangleq\inf\lbrace \alpha>0 : \lambda_{{\mathbb{P}},\theta}(\alpha){\leq}\kappa(\alpha)\rbrace.
\end{equation}
\begin{example}
Assume $\xi$ to be stationary and ergodic, i.e. $\mathbb{P}\lbrace\xi\in I\rbrace\in\lbrace 0,1\rbrace$ for any $I\in \mathscr{I}$. As a consequence, the sequence $\theta(\xi_1),\theta(\xi_2),...$ turns out to be stationary and ergodic (see Lemma 9.1 combined with Lemma 9.5 in \cite{kal}) and hence 
\begin{eqnarray}
\upsilon &=& \mathbb{P}[\xi_1\in \cdot \vert \mathscr{I}] = \mathbb{P}\circ \xi_1^{-1} \hspace{.5cm} \text{a.s.} \nonumber \\
\upsilon\circ\theta^{-1} &=& \mathbb{P}[\theta(\xi_1)\in\cdot\vert \mathscr{I}] = \mathbb{P}\circ\theta(\xi_1)^{-1} \hspace{.5cm} \text{a.s.} \nonumber
\end{eqnarray}
Next, assume $\mathbb{P}\circ \xi_1^{-1}=\delta_{x_1}$ for some $x_1\in E$, and suppose that $\theta(x_1)=x_2$, which implies $\mathbb{P}\circ\theta(\xi_1)^{-1}=\delta_{x_2}$. 
Then, one has $d_\psi(\upsilon,\upsilon\circ\theta^{-1})=d_\psi(\delta_{x_1},\delta_{x_2})$ a.s. where 
\begin{equation}
d_\psi(\delta_{x_1},\delta_{x_2}) =  \min\lbrace  d_E(x_1,x_2),1 \rbrace + \vert \psi(x_1) - \psi(x_2) \vert \hspace{.5cm} \text{a.s.} \nonumber
\end{equation}
and as a direct consequence, the following bound holds
\begin{equation}
\Vert \theta \Vert_{\mathbb{P},\kappa}\leq \min\lbrace  d_E(x_1,x_2),1 \rbrace + \vert \psi(x_1) - \psi(x_2)\vert. \nonumber
\end{equation} 
\end{example}
\begin{lemma}\label{le:qualitative_robustness:theta}
If $\tau$ is uniformly $\psi$-continuous and it admits $\kappa$ as modulus of continuity, then {$\pi(\mathbb{P}\circ \tau(\upsilon)^{-1},\mathbb{P}\circ \tau(\upsilon\circ \theta^{-1})^{-1})\leq \kappa(\Vert\theta \Vert_{\mathbb{P},\kappa})$}.
\end{lemma}
\begin{proof}
Let $C\in\mathscr{T}$ and fix $\alpha > 0$ such that $\lambda_{{\mathbb{P}},\theta}(\alpha){\leq}\kappa(\alpha)$. Since $\tau$ is $\psi$-continuous, then $\tau^{-1}(C)\in\mathscr{M}^\psi$. In particular, for any $A\in\mathscr{M}^\psi$, we denote by $A^\varepsilon\triangleq \lbrace \mu \in \mathfrak{M}_1^\psi(E): d_\psi(\mu,\nu)\leq \varepsilon, \text{ for some $\nu\in A$}\rbrace$ the $\varepsilon$-hull of $A$ defined in terms of the metric $d_\psi$.

Notice that $[\tau^{-1}(C)]^\alpha \subseteq \tau^{-1}(C^{\kappa(\alpha)})$ in $\mathfrak{M}_1^{\psi}(E)$, since $\tau$ is uniformly $\psi$-continuous and admits $\kappa$ as modulus of continuity, where the $\kappa(\alpha)$-hull $C^{\kappa(\alpha)}$ of $C$ is defined in terms of the metric $d_T$. Hence, ${\upsilon\circ \theta^{-1}}\in [\tau^{-1}(C)]^\alpha$ implies ${\upsilon\circ \theta^{-1}}\in \tau^{-1}(C^{\kappa(\alpha)})$, and in particular one has that $\mathbb{P}\lbrace {\upsilon\circ \theta^{-1}} \in [\tau^{-1}(C)]^\alpha\rbrace \leq \mathbb{P}\circ \tau({\upsilon\circ \theta^{-1}})^{-1}(C^{\kappa(\alpha)})$. Thus,
\begin{eqnarray}
\mathbb{P}\circ \tau(\upsilon)^{-1}(C) &\leq & \mathbb{P}\lbrace d_\psi (\upsilon,{\upsilon\circ \theta^{-1}})>\alpha\rbrace + \mathbb{P}\lbrace {\upsilon\circ \theta^{-1}} \in [\tau^{-1}(C)]^\alpha \rbrace \nonumber \\
&\leq & \kappa(\alpha) + \mathbb{P}\circ \tau({\upsilon\circ \theta^{-1}})^{-1} \big(C^{\kappa(\alpha)}\big). \nonumber
\end{eqnarray}
Then, since the choice of $C\in \mathscr{T}$ is arbitrary, one has that
\begin{equation}
\pi\big(\mathbb{P}\circ \tau(\upsilon)^{-1},\mathbb{P}\circ \tau({\upsilon\circ \theta^{-1}})^{-1}\big)\leq \kappa(\alpha). \nonumber
\end{equation}
The proof is concluded by letting $\alpha$ tend to $\Vert\theta\Vert_{\mathbb{P},\kappa}$, while invoking the continuity of $\kappa$. %$\hfill \square$
\end{proof}
\begin{remark}
Note that, if $\kappa$ is defined to be the identity on $(0,+\infty)$, then (\ref{eq:norm_theta}) boils down to the \emph{Ky Fan distance} between $\upsilon$ and $\upsilon\circ\theta$, which are understood as random variables with values in $(\mathfrak{M}_1^\psi(E),\mathscr{M}^\psi)$, (cf. \cite{dud}, \S 9.2). In particular, when looking at Lemma \ref{le:qualitative_robustness:theta}, this is the case when $\tau$ is a contraction.
\end{remark}

{
\begin{theorem}[{Asymptotic stability}]\label{th:qualitative_robustness_theta}
Let $\xi$ be stationary such that $\mathscr{L}(\xi_1)\in \mathfrak{M}_1^\psi(E)$ and assume that $\mathscr{L}(\theta(\xi_1))\in \mathfrak{M}_1^\psi(E)$. If $\tau$ is uniformly $\psi$-continuous and it admits $\kappa$ as modulus of continuity, then
\begin{equation}\label{eq:qualitative_robustness_theta}
\limsup_{n\geq 1} \ \pi (\mathbb{P}\circ\tau(m_n)^{-1}, \mathbb{P}\circ\tau(m_n\circ\theta^{-1})^{-1})\leq \kappa(\Vert\theta\Vert_{\mathbb{P},\kappa}).
\end{equation}
\end{theorem}
}
\begin{proof}
By the triangle inequality,
{
\begin{equation}\label{eq:inequaliy_theta1}
\begin{aligned}
\pi (\mathbb{P}\circ\tau(m_n)^{-1}, & \mathbb{P}\circ\tau(m_n\circ\theta^{-1})^{-1}) \leq \\
& \pi(\mathbb{P}\circ\tau(m_n)^{-1},\mathbb{P}\circ\tau(\upsilon)^{-1}) \\&+ \pi(\mathbb{P}\circ\tau(\upsilon)^{-1}, \mathbb{P}\circ\tau(\upsilon\circ\theta^{-1})^{-1}) \\&+ \pi(
\mathbb{P}\circ\tau(\upsilon\circ\theta^{-1})^{-1},
 \mathbb{P}\circ\tau(m_n\circ\theta^{-1})^{-1} ).
\end{aligned}
\end{equation}
}
Since $\tau$ is uniformly $\psi$-continuous and it admits $\kappa$ as modulus of continuity, Lemma \ref{le:qualitative_robustness:theta} applies. Thus, we deduce from inequality (\ref{eq:inequaliy_theta1}) that
{
\begin{equation}\label{eq:inequaliy_theta2}
\begin{aligned}
\limsup_{n\geq 1}  \ & \pi (\mathbb{P}\circ\tau(m_n)^{-1}, \mathbb{P}\circ\tau(m_n\circ\theta^{-1})^{-1})\leq 
\kappa(\Vert \theta \Vert_{\mathbb{P},\kappa}) \\ 
 &+ \limsup_{n\geq 1} \ \pi(\mathbb{P}\circ\tau(m_n)^{-1},\mathbb{P}\circ\tau(\upsilon)^{-1}) \\ 
 &+  \limsup_{n\geq 1} \ \pi( \mathbb{P}\circ\tau(m_n\circ\theta^{-1})^{-1}, \mathbb{P}\circ\tau(\upsilon\circ\theta^{-1})^{-1}). \nonumber
\end{aligned}
\end{equation}
}

Since $\xi$ is assumed to be stationary such that $\mathscr{L}(\xi_1)\in \mathfrak{M}_1^\psi(E)$ and $\tau$ is $\psi$-continuous, the result described in Corollary \ref{co:consisency} guarantees that $\mathbb{P}$-almost surely  ${\tau(m_n)}\rightarrow \tau(\upsilon)$, as $n\rightarrow +\infty$, and hence 
{
\begin{equation}
\limsup_{n\geq 1} \pi(\mathbb{P}\circ\tau(m_n)^{-1},\mathbb{P}\circ\tau(\upsilon)^{-1}) = 0. \nonumber 
\end{equation}
}

On the other hand, the sequence $\theta(\xi_1),\theta(\xi_2),...$ is stationary since $\xi$ is stationary (see Lemma 9.1 in \cite{kal}). Moreover, note that
\begin{equation}
m_n\circ\theta^{-1} \triangleq \frac{1}{n}\sum_{i\leq n} \delta_{\xi_i}\circ\theta^{-1} = \frac{1}{n}\sum_{i\leq n} \delta_{\theta(\xi_i)}. \nonumber
\end{equation}
Then, since $\mathscr{L}(\theta(\xi_1))\in \mathfrak{M}_1^\psi(E)$, Corollary \ref{co:consisency} guarantees that $\mathbb{P}$-almost surely  ${\tau(m_n\circ\theta^{-1})}\rightarrow \tau(\upsilon_\theta)$, as $n\rightarrow +\infty$, where we write $\upsilon_\theta$ to denote a regular version of the conditional distribution $\mathbb{P}[\theta(\xi_1)\in \cdot\vert \mathscr{I}]$. On the other hand, notice that  $\mathbb{P}$-almost surely $\mathbb{P}[\theta(\xi_1)\in \cdot\vert \mathscr{I}]= \mathbb{P}[\xi_1 \in \theta^{-1}(\cdot)\vert \mathscr{I}]$ and hence $\upsilon_\theta = \upsilon\circ \theta^{-1}$. Thus, 
\begin{equation}
\limsup_{n\geq 1} \ \pi( \mathbb{P}\circ\tau(m_n\circ\theta^{-1})^{-1}, \mathbb{P}\circ\tau(\upsilon\circ\theta^{-1})^{-1}) = 0. \nonumber
\end{equation}

\end{proof}

\begin{remark}\label{re:intepretation_qualitative_robustness}
In the case when $\xi$ describes the outcomes in a sequence of trials, we may understand  the action of the endomorphism $\theta$ as a perturbation of the available dataset and the function $\lambda_\theta$ defined in (\ref{eq:perturbation_probaility}) measures the impact of such a perturbation in terms of the random measure $\upsilon$.

%In particular, note that $\Vert \theta \Vert_{\mathbb{P},\kappa} = 0$, when $\theta$ is chosen to be the identity over $\Omega$, or more generally when the action of $\theta$ does not affect the distribution of the random measure $\upsilon$.

It is easy to realize that, when the perturbation procedure encoded by the action of the map $\theta$ does not change appreciably the random measure $\upsilon$ in the stochastic sense provided by (\ref{eq:perturbation_probaility}), then one should expect $\Vert \theta \Vert_{\mathbb{P},\kappa}$ to be small. In particular, this form of continuity is properly assessed in terms of $\kappa$.

{
In the particular case when $\tau$ admits $\kappa$ as modulus of continuity, Theorem \ref{th:qualitative_robustness_theta} guarantees
that any changes in the law of $\tau_n$ due to small perturbations at the level of the dataset encoded by the map $\theta$ are asymptotically gauged by the relation described in (\ref{eq:qualitative_robustness_theta}) by means of the terms $\kappa(\Vert\theta\Vert_{\mathbb{P},\kappa})$.}

{
For instance, {fix a function $f$ such that $f-\psi\in \mathfrak{BL}_1(E)$} and consider $\tau_f$ to be the functional on $\mathfrak{M}_1^\psi(E)$ defined by setting $\mu\in \mathfrak{M}_1^{\psi}(E)\mapsto\tau_f(\mu) \triangleq \mu f$. Notice that
{
\begin{equation}
\vert \tau_f(\mu)-\tau_f(\nu) \vert \leq \beta(\mu,\nu) + \vert (\mu-\nu)\psi \vert \leq C d_\psi(\mu,\nu), \hspace{.5cm}  \text{for any $\mu,\nu\in\mathfrak{M}_1^{\psi}(E)$,} \nonumber
\end{equation}
}
for some positive constant $C$, since the distance $\beta$ as given in (\ref{eq:metric_beta}) is equivalent to Prohorov metric $\pi$.  {Hence, $\tau$ is uniformly $\psi$-continuous and it admits $\kappa(\alpha)\triangleq C\alpha$, for $\alpha\geq 0$, as modulus of continuity.} According to Theorem \ref{th:qualitative_robustness_theta}, the changes in the law of the sample mean of $f$,
\begin{equation}
m_n f = \frac{1}{n}\sum_{i=1}^n f(\xi_i),
\end{equation}
with respect to the metric $\pi$, when considering the sequence $\theta(\xi_1),\theta(\xi_2),...$ are asymptotically controlled by the term
\begin{equation}
\Vert\theta \Vert_{\mathbb{P},\kappa}=\inf\lbrace \alpha>0 : \lambda_{{\mathbb{P}},\theta}(\alpha){\leq}C\alpha\rbrace. \nonumber
\end{equation}
}
\end{remark}

{
\begin{remark}[Qualitative Robustness]\label{re:qualitaive_robusntess}
Theorem \ref{th:qualitative_robustness_theta} and its interpretation in Remark \ref{re:intepretation_qualitative_robustness} are in line with the notion of qualitative robustness as discussed in Kr\"atschmer et al. \cite{kra1,kra} and Z\"ahle \cite{zh2,zh1}.
Following these cited authors, any statistic $\tau$ is said to be qualitative robust if small changes of the law related to the outcomes $\xi_n$ only result in small changes of the distribution characterizing the estimators $\tau_n$, for $n$ large enough.
\end{remark}
}

\begin{remark}[{Asymptotic Stability} and Elicitability]
\emph{Elicitability} provides a widely discussed aspect in evaluating point forecasts; for background see for instance \cite{fiss,gne,osb,zie}. In this respect, assume that
the statistic $\tau$ is \emph{elicitable}, relative to the class $\mathfrak{M}_1^\psi(E)$, by considering some \emph{strictly consistent scoring function} $S:T\times E \rightarrow [0,+\infty)$. Moreover assume that $\tau$ is uniformly continuous with respect to the functional $(\mu,\nu)\mapsto \tilde{S}(\mu,\nu)\triangleq \int_E S(\tau(\mu),x)\nu(dx)$ in the sense that
\begin{equation}\label{eq:scoring_uniform_continuity}
d_T(\tau(\mu),\tau(\nu))\leq \kappa(\tilde{S}(\mu,\nu)), \hspace{1cm} \text{for any
 $\mu,\nu\in\mathfrak{M}_1^\psi(E)$},
\end{equation}
for some non-negative continuous and increasing function $\kappa$ vanishing at zero.

Recall that $\Vert \theta \Vert_{\mathbb{P},\kappa}$ as defined in (\ref{eq:norm_theta}) implicitly depends on the metric $d_\psi$.
In a similar way, if $\tilde{S}$ is $(\mathscr{M}^\psi\otimes\mathscr{M}^\psi)$-measurable, we may define
\begin{equation}\label{eq:norm_theta_1}
\Vert \theta \Vert^{(1)}_{\mathbb{P},\kappa}\triangleq \inf\lbrace \alpha > 0 : \mathbb{P}\lbrace \tilde{S}(\upsilon,{\upsilon\circ\theta^{-1}}) > \alpha\rbrace < \kappa(\alpha) \rbrace.
\end{equation}
Hence, under condition (\ref{eq:scoring_uniform_continuity}) a similar estimate as provided in Lemma \ref{le:qualitative_robustness:theta} may be assessed in terms of (\ref{eq:norm_theta_1}), and if in addition $\tau$ is assumed to be $\psi$-continuous, $\xi$ is stationary and $\mathbb{P}$ is quasi-invariant under $\theta$, then, the arguments in the proof of Theorem \ref{th:qualitative_robustness_theta} still
remain in force and give
\begin{equation}
{\limsup_{n\geq 1} \ \pi (\mathbb{P}\circ\tau(m_n)^{-1}, \mathbb{P}\circ\tau(m_n\circ\theta^{-1})^{-1})}\leq \kappa(\Vert \theta \Vert^{(1)}_{\mathbb{P},\kappa}). \nonumber
\end{equation}

As an example, when considering $E$ and $T$ to be the real line endowed with the euclidean metric and $\psi$ the identity, if $\tau:\mu\in\mathfrak{M}_1^\psi(E)\mapsto \int_\mathbb{R} x \mu(dx)$ defines the mean and $S(x,y)\triangleq (x-y)^2$, for any $(x,y)\in\mathbb{R}^2$, then condition (\ref{eq:scoring_uniform_continuity}) is guaranteed when for instance $\kappa(z)\triangleq \sqrt{z}$, for any $z\geq 0$.

Observe also that, in the case when $\psi$ is strictly increasing and $\tau(\mu)$ is defined as the $\alpha$-quantile of the law $\mu\in\mathfrak{M}_1^\psi(E)$, for some fixed $\alpha\in(0,1)$, and the related scoring function is given by $S(x,y)\triangleq (\mathbbm{1}_{\lbrace x\geq y\rbrace}-\alpha)(\psi(x)-\psi(y))$, (see, e.g., Theorem 3.3 in \cite{gne}), condition (\ref{eq:scoring_uniform_continuity}) fails for any $\kappa$.
\end{remark}

\section{Concluding Remarks}

Theorem \ref{th:varadarajan_type}, as well as Corollary \ref{co:consisency} and Theorem \ref{th:qualitative_robustness_theta}, still remain in force when the sequence of projections $\xi_1,\xi_2,...$ displays some other forms of probabilistic symmetries.

Recall that the random sequence $\xi=(\xi_1,\xi_2,...)$ is said to be \emph{exchangeable} if and only if $\mathscr{L}(\xi_i: i\in \mathfrak{I})=\mathscr{L}(\xi_{\pi_\mathfrak{I}(i)}:i\in \mathfrak{I})$, for any finite family $\mathfrak{I}$ of indices and any permutation $\pi_\mathfrak{I}$ on it.
A numerable sequence of exchangeable random variables is always stationary, (cf. \cite{kal1}, Proposition 2.2). In particular, we get that $\mathbb{P}$-almost surely $\mathscr{I}=\sigma(\upsilon)$, (cf. \cite{kal3}, Corollary 1.6). In addition, each of the previous $\sigma$-algebras turns out to be $\mathbb{P}$-trivial in the independence setup. In this respect, we are allowed to recast the limit random variable in Theorem \ref{th:varadarajan_type} by writing $\upsilon=\mathbb{P}[\xi_1\in \cdot \ \vert \upsilon]$, where the equality shall be intended in the $\mathbb{P}$-almost surely sense. On the other hand, according to de Finetti's Theorem (cf. \cite{kal3}, Theorem 1.1), when dealing with a numerable random sequence $\xi=(\xi_1,\xi_2,...)$ in $E$, the notion of exchangeability equals a conditional form of independence, i.e. one has that $\mathbb{P}$-almost surely $\mathbb{P}[\xi\in\cdot \ \vert \upsilon]=\upsilon^{\mathbb{N}}$.

Exchangeability provides the main pillar of the Bayesian approach to the inferential analysis. More precisely, when dealing with the non parametric setup, the law induced by the random measure $\upsilon$ over the space $(\mathfrak{M}_1^{\psi}(E),\mathscr{M}^\psi)$ may be regarded as the \emph{prior distribution} of the statistical model $\xi_1,\xi_2,...\vert \upsilon \sim_{iid} \upsilon$, where the latter form of independence is to be understood in terms of de Finetti's theorem.

According to such a formulation, Theorem \ref{th:qualitative_robustness_theta} may be regarded as a form of stability obtained when the prior distribution of the model is forced to change, {by considering the random measure defined by
 identity (\ref{eq:upsilon_theta}).}

%\begin{acknowledgements}
%If you'd like to thank anyone, place your comments here
%and remove the percent signs.
%\end{acknowledgements}

% BibTeX users please use one of
%\bibliographystyle{spbasic}      % basic style, author-year citations
%\bibliographystyle{spmpsci}      % mathematics and physical sciences
%\bibliographystyle{spphys}       % APS-like style for physics
%\bibliography{}   % name your BibTeX data base

%\nocite{bog2}\nocite{kal1} \nocite{stras}\nocite{kal3} \nocite{hew}\nocite{kin}\nocite{ald} \nocite{defin1}\nocite{defin2}\nocite{eng} \nocite{defin3}\nocite{prok} \nocite{rao} \nocite{ryll} \nocite{freed} \nocite{rac1} \nocite{dud2}
%\nocite{gan} \nocite{bel} \nocite{lau} \nocite{kra_quasi} \nocite{lac} \nocite{lac1} \nocite{koch} \nocite{sin} \nocite{cuev1}

{
\section*{Acknowledgements}
The authors would like to thank an anonymous reviewer for the suggestions which clearly contributed to improve the article.
}

\bibliographystyle{acm}
\bibliography{bibfile}

\begin{thebibliography}{10}

\bibitem{ali}
{\sc Aliprantis, C.~D., and Border, K.}
\newblock {\em Infinite dimensional analysis: a hitchhiker's guide}.
\newblock Springer Science \& Business Media, 2006.

\bibitem{bog2}
{\sc Bogachev, V.~I.}
\newblock {\em Measure theory}, vol.~2.
\newblock Springer Science \& Business Media, 2007.

\bibitem{con}
{\sc Cont, R., Deguest, R., and Scandolo, G.}
\newblock Robustness and sensitivity analysis of risk measurement procedures.
\newblock {\em Quantitative Finance 10}, 6 (2010), 593--606.

\bibitem{cuev}
{\sc Cuevas, A., and Romo, J.}
\newblock On robustness properties of bootstrap approximations.
\newblock {\em Journal of Statistical Planning and Inference 37}, 2 (1993),
  181--191.

\bibitem{dud}
{\sc Dudley, R.~M.}
\newblock {\em Real analysis and probability}, vol.~74.
\newblock Cambridge University Press, 2002.

\bibitem{fiss}
{\sc Fissler, T., and Ziegel, J.~F.}
\newblock Higher order elicitability and osband's principle.
\newblock {\em The Annals of Statistics 44}, 4 (2016), 1680--1707.

\bibitem{foll3}
{\sc F{\"o}llmer, H., and Schied, A.}
\newblock {\em Stochastic finance: an introduction in discrete time}.
\newblock Walter de Gruyter, 2011.

\bibitem{foll}
{\sc F{\"o}llmer, H., and Weber, S.}
\newblock The axiomatic approach to risk measures for capital determination.
\newblock {\em Annual Review of Financial Economics 7\/} (2015), 301--337.

\bibitem{ghosh}
{\sc Ghosh, J.~K., and Ramamoorthi, R.}
\newblock {\em Bayesian Nonparametrics}.
\newblock Springer, 2003.

\bibitem{gne}
{\sc Gneiting, T.}
\newblock Making and evaluating point forecasts.
\newblock {\em Journal of the American Statistical Association 106}, 494
  (2011), 746--762.

\bibitem{hampphd}
{\sc Hampel, F.~R.}
\newblock {\em Contributions to the theory of robust estimation}.
\newblock PhD thesis, University of California, Berkeley, 1969.

\bibitem{hamp}
{\sc Hampel, F.~R.}
\newblock A general qualitative definition of robustness.
\newblock {\em The Annals of Mathematical Statistics 42}, 6 (1971), 1887--1896.

\bibitem{hub}
{\sc Huber, P.~J.}
\newblock {\em Robust statistics}.
\newblock Springer, 2011.

\bibitem{kal1}
{\sc Kallenberg, O.}
\newblock Spreading and predictable sampling in exchangeable sequences and
  processes.
\newblock {\em The Annals of Probability 16}, 2 (1988), 508--534.

\bibitem{kal}
{\sc Kallenberg, O.}
\newblock {\em Foundations of modern probability}.
\newblock Springer Science \& Business Media, 1997.

\bibitem{kal3}
{\sc Kallenberg, O.}
\newblock {\em Probabilistic symmetries and invariance principles}.
\newblock Springer Science \& Business Media, 2006.

\bibitem{kou}
{\sc Kou, S., Peng, X., and Heyde, C.~C.}
\newblock External risk measures and basel accords.
\newblock {\em Mathematics of Operations Research 38}, 3 (2013), 393--417.

\bibitem{kra1}
{\sc Kr{\"a}tschmer, V., Schied, A., and Z{\"a}hle, H.}
\newblock Qualitative and infinitesimal robustness of tail-dependent
  statistical functionals.
\newblock {\em Journal of Multivariate Analysis 103}, 1 (2012), 35--47.

\bibitem{kra}
{\sc Kr{\"a}tschmer, V., Schied, A., and Z{\"a}hle, H.}
\newblock Comparative and qualitative robustness for law-invariant risk
  measures.
\newblock {\em Finance and Stochastics 18}, 2 (2014), 271--295.

\bibitem{mac}
{\sc Mackey, G.~W.}
\newblock Borel structure in groups and their duals.
\newblock {\em Transactions of the American Mathematical Society 85}, 1 (1957),
  134--165.

\bibitem{miz}
{\sc Mizera, I.}
\newblock Qualitative robustness and weak continuity: the extreme function.
\newblock {\em Nonparametrics and Robustness in Modern Statistical Inference
  and Time Series Analysis: A Festschrift in honor of Professor Jana
  Jureckov{\'a} 1\/} (2010), 169.

\bibitem{osb}
{\sc Osband, K.}
\newblock {\em Providing incentives for better cost forecasting}.
\newblock PhD thesis, University of California, Berkeley, 1985.

\bibitem{par}
{\sc Parthasarathy, K.~R.}
\newblock {\em Probability measures on metric spaces}, vol.~352.
\newblock American Mathematical Society, 1967.

\bibitem{van}
{\sc van Handel, R.}
\newblock The universal {G}livenko--{C}antelli property.
\newblock {\em Probability Theory and Related Fields 155}, 3-4 (2013),
  911--934.

\bibitem{zh2}
{\sc Z{\"a}hle, H.}
\newblock Qualitative robustness of statistical functionals under strong
  mixing.
\newblock {\em Bernoulli 21}, 3 (2015), 1412--1434.

\bibitem{zh1}
{\sc Z{\"a}hle, H.}
\newblock A definition of qualitative robustness for general point estimators,
  and examples.
\newblock {\em Journal of Multivariate Analysis 143\/} (2016), 12--31.

\bibitem{zie}
{\sc Ziegel, J.~F.}
\newblock Coherence and elicitability.
\newblock {\em Mathematical Finance 26}, 4 (2016), 901--918.

\end{thebibliography}
\end{document}